\newif\ifpictures
\newif\ifcomment
\newtest{\mycondition}[2]{%
	\cnttest{(#1)+(#2)}<{5}%
	}
\newcommand{\Rn}{\mathbb{R}^n}
\newcommand{\Hratio}{\mathcal{H}}
\DeclareMathOperator{\ListHead}{head}
\DeclareMathOperator{\ListTail}{tail}
\DeclareMathOperator{\MidD}{Mid}
\DeclareMathOperator{\GL}{GL}
\DeclareMathOperator{\rank}{rank}
\DeclareMathOperator{\coeff}{coeff}
\DeclareMathOperator{\MMSPreservingGroup}{\mathcal{M}_n}
\author{Jacob Hartzer}
\address{Jacob Hartzer, Texas A\&M University, Mechanical Engineering Department, College Station, TX 77843, USA\medskip}
\email{jmhartzer@tamu.edu}
\author{Olivia R\"ohrig}
\address{Olivia R\"ohrig, Technische Universit\"at Berlin, Institut f\"ur Mathematik, Stra{\ss}e des 17.~Juni 136, 10623 Berlin,
 Germany\medskip}
\email{roehrig@math.tu-berlin.de}
\author{Timo de Wolff}
\address{Timo de Wolff, Technische Universit\"at Braunschweig, Institut f\"ur Analysis und Algebra, AG Algebra, Universit\"atsplatz 2, 38106 Braunschweig,
 Germany\medskip}
\email{t.de-wolff@tu-braunschweig.de}
\author{O\u{g}uzhan Y{\"{u}}r{\"{u}}k}
\address{Oguzhan Y{\"{u}}r{\"{u}}k, Technische Universit\"at Braunschweig, Institut f\"ur Analysis und Algebra, AG Algebra, Universit\"atsplatz 2, 38106 Braunschweig, Germany\medskip}
\email{oguyueru@tu-braunschweig.de}
\subjclass[2010]{
14P10, 
52B20, 
11E25
}
\keywords{Lattice, maximal mediated set, nonnegativity, sum of nonnegative circuit polynomials, sums of squares}
\title{Initial Steps in the Classification of Maximal Mediated Sets}
\date{\today}
\begin{document}

\begin{abstract}
Maximal mediated sets (MMS), introduced by Reznick, are distinguished subsets of lattice points in integral polytopes with even vertices. MMS of Newton polytopes of AGI-forms and nonnegative circuit polynomials determine whether these polynomials are sums of squares.

In this article, we take initial steps in classifying MMS both theoretically and practically. Theoretically, we show that MMS of simplices are isomorphic if and only if the simplices generate the same lattice up to permutations. Furthermore, we generalize a result of Iliman and the third author. Practically, we fully characterize the MMS for all simplices of sufficiently small dimensions and maximal 1-norms. In particular, we experimentally prove a conjecture by Reznick for 2 dimensional simplices up to maximal 1-norm 150 and provide indications on the distribution of the density of MMS.
\end{abstract}

\maketitle
\section{Introduction}

Studying \textit{\struc{nonnegativity}} of real, multivariate polynomials is a key problem in real algebraic geometry.
Since deciding nonnegativity is well-known to be a hard problem both in theory and practice, one is interested in \textit{\struc{certificates}} of nonnegativity, i.e., a special structure of real polynomials, which implies nonnegativity and is easier to test. The classical as well as most common certificate of nonnegativity are \struc{\textit{sums of squares (SOS)}}; see e.g., \cite{Marshall:Book}, and see \cite{Reznick:SurveyHilbert17th} for a historical overview.

Let $\struc{\R[\Vector{x}]} = \R[x_1,\ldots,x_n]$ denote the ring of real $n$-variate polynomials and let $\struc{\R[\Vector{x}]_{2d}}$ denote the vector space of polynomials in $\R[\Vector{x}]$ of degree at most $2d$. For arbitrary fixed $n$ and $d$  one considers in $\R[\Vector{x}]_{2d}$ the full-dimensional convex cones
\begin{align*}
    \struc{P_{n,2d}} \ := \ \{f \in \R[\Vector{x}]_{n,2d} \ : \ f(\Vector{x}) \geq 0 \text{ for all } \Vector{x} \in \R^n\}, \quad \text{ and }
\end{align*}
\begin{align*}
    \struc{\Sigma_{n,2d}} \ := \ \left\{f \in P_{n,2d} \ : \ f = \sum_{j=1}^{r} s_j^2 \text{ with } s_1,\ldots,s_r \in \R[\Vector{x}]_{n,d}\right\}.
\end{align*}

In 1888, Hilbert \cite{Hilbert:1888} showed that $P_{n,2d} \neq \Sigma_{n,2d}$ unless $(n,2d) \in \{(k,2),(2,4),(1,k) \ : \ k \in \N_{> 0}\}$. Since then nonnegativity of real polynomials and sums of squares has been an active field of research. This holds especially for the last two decades due to their vast applications in polynomial optimization; see e.g., \cite{Blekherman:Parrilo:Thomas:SDOptAndConvAlgGeo}, \cite{Laserre:MomentsPosPoly}.

Another classical way to certify nonnegativity of a polynomial is the \struc{\textit{inequality of arithmetic and geometric means}}, short \struc{\textit{AM-GM inequality}}, which states that:
\begin{align}
	\sum_{j = 0}^d \lambda_j t_j - \prod_{j = 0}^d t_j^{\lambda_j} \ \geq \ 0, \label{Equ:AMGMInequality}
\end{align}
if $t_j \geq 0$, $\lambda_j \geq 0$ and $\sum_{j=1}^d \lambda_j = 1$.

In 1891, Hurwitz provided a new proof for the AM-GM inequality \cite{Hurwitz:AMGM}, in particular proving nonnegativity of polynomials of the form 
\begin{align}
	\sum_{j = 1}^{2n} x_j^{2n} - 2n \prod_{j=1}^{2n}x_i. \label{Equ:HurwitzPolynomial}
\end{align}
Furthermore, he showed that polynomials of the form \cref{Equ:HurwitzPolynomial} are in $\Sigma_{n,2d}$. 
Following Reznick's notation, we refer these polynomials as \textit{\struc{Hurwitz forms}}.

In 1989, Reznick \cite{Reznick:AGI} vastly generalized this approach, building on Hurwitz's \cite{Hurwitz:AMGM} work and earlier results by Choi and Lam, e.g., \cite{Choi:Lam:ExtremalPositiveSemidefiniteForms}. Using the AM-GM inequality, he generated a class of nonnegative polynomials called \textit{\struc{AGI-forms}}. 
In recent years, Iliman and the third author generalized simplicial AGI-forms to circuit polynomials \cite{Iliman:deWolff:Circuits}, and connected these to polynomial optimization; see e.g., \cite{Iliman:deWolff:LowerBoundsforSimplexNewtPoly}.
Moreover, nonnegative functions recently introduced by Chandrasekaran and Shah \cite{Chandrasekaran:Shah:SAGE}, which are motivated by signomial programming, can be seen as a generalization of AGI-forms; see also \cite{DeWolff:Forsgard:AlgebraicBoundarySONC}.

In particular, a nonnegative AGI-form or a nonnegative circuit polynomial is not SOS in general; see \cite[Corollary 4.9]{Reznick:AGI} and \cite[Theorem 5.2]{Iliman:deWolff:Circuits}. 
The easiest example is the \textit{\struc{Motzkin polynomial}}:
\begin{align}
	\label{Equ:MotzkinPolynomial}
	\struc{M(x_1,x_2)} \ := \ 1 + x_1^2x_2^4 + x_1^4x_2^2 - 3 x_1^2x_2^2.
\end{align}
It is well-known as the first nonnegative polynomial in the literature, which is not SOS; see \cite{Motzkin:AMGMIneq} and \cite{Reznick:SurveyHilbert17th}.

\medskip

The \textit{\struc{maximal mediated set}} of an integral simplex $S$ with vertices $\vertices{S}$ in $(2\Z)^n$ is the largest subset $M$ of lattice points in $\Z^n \cap S$ satisfying the following two properties:
\begin{enumerate}
	\item $\vertices{S} \subset M$, and
	\item if $p \in M$, then there exist $q_1,q_2 \in (2\Z)^n \cap M$ with $p = \dfrac{1}{2}(q_1 + q_2)$. 
\end{enumerate}
For further details see \cref{Definition:MediatedSet} and \cref{Definitiom:MMS}.

\medskip

In \cite[Corollary 4.9]{Reznick:AGI} Reznick proved that a nonnegative simplicial AGI-form is a sum of squares if and only if the exponent of one distinguished term belongs to the \textit{maximal mediated set} of its Newton polytope. Iliman and the third author generalized this statement to circuit polynomials \cite{Iliman:deWolff:Circuits}. More specifically, \cite[Theorem 5.2]{Iliman:deWolff:Circuits} states that a nonnegative circuit polynomial $f$ is a sum of squares if and only if the support of $f$ is contained in the maximal mediated set of the support of $f$. Thus, the question of whether a nonnegative circuit polynomial is a sum of squares depends on the support \textit{alone} and hence is \textit{purely combinatorial}. This is in sharp contrast to the case of general nonnegative polynomials, where this question also depends on the coefficients.

\medskip

In this article we initiate a systematic study of maximal mediated sets both from the theoretical and the practical point of view. On the theoretical side, we re-examine an observation by Reznick \cite[Page 445]{Reznick:AGI} in detail for integral simplices.
First, we introduce the \struc{$h$-\textit{ratio}} of $\vertices{S}$ to measure the density of the maximal mediated set of $S$ in $\Z^n \cap S$; see \cref{Definition:hratio}.
We show that a map $T$ from $\R^n$ to $\R^n$ preserves the $h$-ratio if and only if $T \in (2\Z)^n \rtimes \GL(\Z^n)$. 
Most importantly, we show that for an integral simplex $S$ with even lattice points the associated $h$-ratio is an invariant of an underlying lattice described in \cref{Corollary:MMSisInvariantofLattice}.
Lastly, in \cref{Theorem:SONCwithSimplexNewt}, we generalize the \cite[Theorem 5.2]{Iliman:deWolff:Circuits} and relate maximal mediated sets with a larger class of polynomials.

The majority of our contribution is on the practical side. Here, we perform a large scale computation via \textsc{Polymake} \cite{Ewgeniy:Joswig:polymake:2000} generating a database of MMS.
We compute all maximal mediated sets of dimension $n$ and maximal 1-norm $2d$ for the values of $n$ and $2d$ given in \cref{table:datainfo_full}.
In addition, we investigate the statistics of $h$-ratio by sampling for the cases given in \cref{table:datainfo_sampled}.
We utilize \cref{Corollary:MMSisInvariantofLattice} in order to reduce the size of the database. 

Using this database of maximal mediated sets, we provide indications on the distributions of $h$-ratio over simplicial sets and lattices.
We experimentally show, up to $2d=150$, a claim by Reznick from 1989 about maximal mediated sets for $n=2$; see \cref{Conjecture:ReznickDim2}.

\subsection*{Acknowledgments}

We thank Bruce Reznick and Sadik Iliman for their helpful comments. We thank the \textsc{Polymake} team, in particular Benjamin Lorenz, for their invaluable technical support.

TdW and OY are supported by the DFG grant WO 2206/1-1.

\section{Preliminaries}
\label{Section:Preliminaries}
We denote the set of nonnegative integers $\{0,1,\dots,n\}$ by $\struc{[n]}$ and vectors by bold characters. 
Given a finite set $L \subseteq \N^n$, \struc{$\#L$} denotes the cardinality of $L$ and $\struc{[L]}$ denotes the list formed by $L$ with lexicographical order.
We denote the convex hull of $L$ by \struc{$\conv(L)$} and the vertices of $\conv(L)$ by \struc{$\vertices{L}$}. 
If $f \in \R[\Vector{x}]$, then we denote by $\struc{\newton{f}}$ the \textit{\struc{Newton polytope}} of $f$, i.e., the convex hull of the exponents of $f$.
A point $\Vector{\alpha} \in \N^n$ is called \textit{\struc{even}} if all of its entries are even. 
A subset of $\N^n$ is an \textit{\struc{even set}} if it consists of even points only.
An even set $\Delta \subseteq (2\Z)^n$ is \textit{\struc{$k$-simplicial}} if $\conv(\Delta) \subset \R^n$ is a $k$-simplex; we write $\struc{\dim(\Delta)} = k$. 
The \struc{\textit{maximal degree}} of a $k$-simplicial set $\Delta$ is defined as the maximum 1-norm of its elements. 
Furthermore, following the notation in \cite{Reznick:AGI}, we call a $k$-simplicial set $\Delta$ a \textit{\struc{trellis}}, if all of its elements have the same 1-norm. Let $\Delta = \left\{ \Vector{v_0}, \dots, \Vector{v_n} \right\} \subset \R^n$ be a lexicographically ordered $k$-simplicial set. Then we denote by $\struc{M_{\Delta}}$ the column matrix of the elements in $\Delta$, i.e.,
\begin{align*}
\struc{M_{\Delta}} \ = \ \begin{bmatrix} 
	\Vector{v_0} & \cdots & \Vector{v_n}
\end{bmatrix}.
\end{align*}

If $\Vector{v_0} = \Vector{0}$ and $M_{\Delta} \in \Z^{n \times (n+1)}$, then we define $\struc{L_{\Delta}} \subset \Z^n$ as the \textit{\struc{lattice}} generated by rows of the matrix obtained by deleting the first column of $M_{\Delta}$.

Given $L\subseteq \Z^n$,  we define a set of \textit{\struc{midpoints}} of $L$ as follows:
\begin{align*}
\struc{\MidD(L)} \ = \ \left\{ \frac{\Vector{s}+\Vector{t}}{2} : \Vector{s},\Vector{t} \in L\cap (2\Z)^n, \Vector{s} \neq \Vector{t}  \right\}.
\end{align*}

\begin{definition}
	\label{Definition:MediatedSet}
	Let $\Delta \subseteq (2\Z)^n$ be a finite even set. Then $L \subseteq \Z^n$ is called \textit{\struc{$\Delta$-mediated}} if
	\begin{align*}
	\Delta \ \subseteq \ L \ \subseteq \ \MidD(L) \cup \Delta.
	\end{align*}
In other words, $L\subseteq \Z^n$ is called $\Delta$-mediated if it contains $\Delta$, and each element of $L \setminus \Delta$ is the midpoint of two distinct even points in $L$.
\end{definition}

In \cite{Reznick:AGI}, \cref{Definition:MediatedSet} is formulated in the context of trellises, however, Reznick's construction works for any set of even lattice points as it was pointed out in \cite{Iliman:deWolff:Circuits}.   

\begin{example}
\label{Example:MediatedSet}
Let $\Delta = \{ (0,0),(4,0),(0,4) \}$, $L_1 = \{ (0,0),(4,0),(0,4),(1,1) \}$ and $L_2 = \{ (0,0),(4,0),(0,4),(2,3) \}$.
$L_1$ is $\Delta$-mediated because $(1,1)$ is the midpoint of $(0,0),(2,2) \in \MidD(L)$. 
However, $L_2$ is not $\Delta$-mediated because $(2,3)$ cannot be written as a midpoint of two even points from $\conv(L_2)$ since it is a vertex of $\conv(L_2)$.
\end{example}

We observe from \cref{Example:MediatedSet} that if $L$ is $\Delta$-mediated, then $L \subseteq \conv(\Delta) \cap \Z^n$.

\begin{definition}
\label{Definitiom:MMS}
Given a finite $\Delta \subset (2\Z)^n$, the \textit{\struc{maximal $\Delta$-mediated set (MMS)}} \textit{\struc{$\Delta^*$}} is the $\Delta$-mediated set that contains every $\Delta$-mediated set.
\end{definition}

MMS are well-defined due to \cref{Theorem:Reznick:MMSExists}. A constructive proof of this theorem is provided in \cite{Reznick:AGI} using \cref{algorithm:MMS1}.

\begin{theorem}[\cite{Reznick:AGI}, Theorem 2.2]
	\label{Theorem:Reznick:MMSExists}
	Given a finite $\Delta \subset (2\Z)^n$, there exists a unique maximal mediated set $\Delta^*$ satisfying
	\begin{align*}
	\Delta \cup \MidD(\Delta) \ \subseteq \ \Delta^* \ \subseteq \ \conv(\Delta) \cap \Z^n.
	\end{align*}
\end{theorem}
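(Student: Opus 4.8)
The plan is to obtain $\Delta^\ast$ as the union of the family $\mathcal{F}$ of \emph{all} $\Delta$-mediated subsets of $\Z^n$, and then to verify the stated sandwich. The one structural fact that drives everything is that $\MidD$ is monotone with respect to inclusion, so a union of $\Delta$-mediated sets is again $\Delta$-mediated: if $(L_i)_{i \in I}$ are $\Delta$-mediated and $L = \bigcup_{i} L_i$, then $\Delta \subseteq L$, and any $\Vector{p} \in L \setminus \Delta$ lies in some $L_i \setminus \Delta$, hence $\Vector{p} = \tfrac12(\Vector{q}_1 + \Vector{q}_2)$ for distinct $\Vector{q}_1, \Vector{q}_2 \in L_i \cap (2\Z)^n \subseteq L \cap (2\Z)^n$, so $L \subseteq \MidD(L) \cup \Delta$. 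Since $\Delta$ is trivially $\Delta$-mediated, $\mathcal{F} \neq \emptyset$, and I would \emph{define} $\Delta^\ast := \bigcup_{L \in \mathcal{F}} L$. By the observation above $\Delta^\ast \in \mathcal{F}$, and by construction it contains every $\Delta$-mediated set, which is exactly the defining property in \cref{Definitiom:MMS}. Uniqueness is then immediate: any two $\Delta$-mediated sets each containing every $\Delta$-mediated set must contain one another, hence coincide.

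Next I would establish the lower inclusion by showing that $\Delta \cup \MidD(\Delta)$ is itself $\Delta$-mediated, so it lies in $\Delta^\ast$. Indeed, each point of $\MidD(\Delta) \setminus \Delta$ is, by definition of $\MidD$, the midpoint of two distinct points of $\Delta \cap (2\Z)^n = \Delta$, and $\Delta \subseteq \Delta \cup \MidD(\Delta)$, so every element of $(\Delta \cup \MidD(\Delta)) \setminus \Delta$ is realized as a required midpoint within the set.

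For the upper inclusion it suffices to show that \emph{every} $\Delta$-mediated set $L$ satisfies $L \subseteq \conv(\Delta) \cap \Z^n$; I would argue by contradiction. Since $\Delta \subseteq L$ we have $\conv(\Delta) \subseteq \conv(L)$, so if $L \not\subseteq \conv(\Delta)$ then $\conv(\Delta) \subsetneq \conv(L)$, and therefore $\conv(L)$ has a vertex $\Vector{v} \notin \conv(\Delta)$; in particular $\Vector{v} \in L \setminus \Delta$. Being in $L \setminus \Delta$, the point $\Vector{v}$ equals $\tfrac12(\Vector{q}_1 + \Vector{q}_2)$ for distinct $\Vector{q}_1, \Vector{q}_2 \in L \subseteq \conv(L)$, so $\Vector{v}$ lies in the relative interior of the segment $[\Vector{q}_1, \Vector{q}_2] \subseteq \conv(L)$, contradicting that it is a vertex of $\conv(L)$. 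Hence $L \subseteq \conv(\Delta)$, and $L \subseteq \Z^n$ by definition. Applying this to $L = \Delta^\ast$ (and to every member of $\mathcal{F}$, which in particular shows $\mathcal{F}$ is finite and $\Delta^\ast$ is a finite set) yields $\Delta \cup \MidD(\Delta) \subseteq \Delta^\ast \subseteq \conv(\Delta) \cap \Z^n$.

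I do not anticipate a real obstacle; the only step needing a little care is the upper bound, where one must phrase the convexity argument correctly — a lattice point expressed as a nontrivial midpoint of two points of a polytope cannot be a vertex of that polytope — and note that $\Delta \subseteq L$ forces $\conv(L) \supseteq \conv(\Delta)$, so that a vertex of $\conv(L)$ outside $\conv(\Delta)$ genuinely exists whenever $L \not\subseteq \conv(\Delta)$.
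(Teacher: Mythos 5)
Your argument is correct in substance, but it is a genuinely different route from the one the paper relies on. The paper does not prove \cref{Theorem:Reznick:MMSExists} itself: it points to Reznick's constructive proof via \cref{algorithm:MMS1}, i.e.\ start from $\Delta^0=\conv(\Delta)\cap\Z^n$ and iterate $\Delta^i=\MidD(\Delta^{i-1})\cup\Delta$; the sequence is non-increasing, so the upper bound is automatic, and maximality follows by induction, since monotonicity of $\MidD$ gives $L\subseteq\Delta^{i-1}\implies L\subseteq\MidD(L)\cup\Delta\subseteq\Delta^i$ for every $\Delta$-mediated $L$. You instead build $\Delta^*$ from below, as the union of the family $\mathcal{F}$ of all $\Delta$-mediated sets, using monotonicity of $\MidD$ once to see that this union is again $\Delta$-mediated; existence, maximality and uniqueness then come for free, and the two bounds are verified separately (the lower one because $\Delta\cup\MidD(\Delta)$ is itself $\Delta$-mediated, the upper one by your vertex argument). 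Your version is shorter and isolates the order-theoretic content; Reznick's buys an algorithm, which is what the paper actually implements. One caveat: both routes rest on the observation the paper records after \cref{Example:MediatedSet}, that every $\Delta$-mediated set lies in $\conv(\Delta)\cap\Z^n$ (Reznick needs it for the base case $L\subseteq\Delta^0$, you for the upper inclusion), and your proof of it is valid only for \emph{finite} $L$: for infinite $L$ the set $\conv(L)$ need not be generated by its vertices, and indeed for $n=1$, $\Delta=\{0,2\}$, the infinite set $L=\N$ satisfies the letter of \cref{Definition:MediatedSet}. Your parenthetical ``this shows $\mathcal{F}$ is finite'' is therefore circular as written; the clean fix is to restrict $\mathcal{F}$ to finite $\Delta$-mediated sets from the outset (clearly the intended reading, and it costs nothing), after which everything you wrote goes through.
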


\begin{algorithm}[\cite{Reznick:AGI}]
	Given a finite $\Delta \subseteq (2\Z)^n$, the following algorithm computes a non-increasing sequence of subsets that stabilizes at $\Delta^*$. \\
	\INPUT{$\Delta$: finite set of points in $(2\Z)^n$}\\
	\OUTPUT{$\Delta^*$: the $\Delta$-mediated subset of $\Z^n$ that contains every $\Delta$-mediated set}\\
	\begin{algorithmic}[1]
		\State $\Delta^0\gets \conv(\Delta)\cap \Z^n$
		\Repeat
		\State $\Delta^i \gets \MidD(\Delta^{i-1})\cup \Delta$
		\Until{$\Delta^i=\Delta^{i-1}$}
		\State $\Delta^* \gets \Delta^i$
	\end{algorithmic}
	\label{algorithm:MMS1}
\end{algorithm}

Maximal mediated sets arise naturally from the study of nonnegative polynomials supported on a circuit; see \cite{Iliman:deWolff:Circuits}.
\begin{definition}
	\label{Definition:CircuitPolynomial}
	A polynomial $p \in \R[x_1,\dots,x_n]$ is called a \struc{\textit{circuit polynomial}} if it is of the form
	\begin{align*}
	p(\Vector{x}): = c_{\Vector{\beta}} \Vector{x}^{\Vector{\beta}} + \sum_{j=0}^{r} c_{\Vector{\alpha(j)}} \Vector{x}^{\Vector{\alpha(j)}}
	\end{align*} 
	with $r \leq n$, exponents $\Vector{\alpha(j)}, \Vector{\beta} \in \N^n$, and the coefficients $c_{\Vector{\alpha(j)}} \in \R_{>0}$, $c_{\Vector{\beta}} \in \R$ such that $\newton{p}$ is a simplex with even vertices $\Vector{\alpha(0)}, \dots, \Vector{\alpha(r)}$ and the exponent $\Vector{\beta}$ is in the strict interior of $\newton{p}$. \struc{\textit{The MMS associated to} $p$} is the maximal $\Delta$-mediated set with $\Delta = \vertices{\newton{p}}$, denoted by $\struc{\Delta(p)^*}$. 
\end{definition}
Let $p$ be a circuit polynomial given as in \cref{Definition:CircuitPolynomial} such that $p \in P_{n,2d}$. 
In \cite[Theorem 5.2]{Iliman:deWolff:Circuits} authors point out that, such $p$ is in $\Sigma_{n,2d}$ if and only if $\Vector{\beta} \in \Delta(p)^*$.
This fact was proven in \cite[Corollary 4.9]{Reznick:AGI} for the special case of AGI-forms.

\medskip

We distinguish the simplicial sets that attain one of the two bounds given in \cref{Theorem:Reznick:MMSExists}.
Following Reznick's notation, we call a simplicial set $\Delta$ an \textit{\struc{M-simplex}} if $\Delta^* = \Delta \cup \MidD(\Delta)$, and an \textit{\struc{H-simplex}} if $\Delta^* = \conv(\Delta) \cap \Z^n$.
We motivate this choice of notation in \cref{Example:MMSExample}.
\begin{example}
\label{Example:MMSExample}
Consider the two subsets of $(2\Z)^2$, $\Delta_1 = \left\{ (0,0), (2,4), (4,2) \right\}$ and $\Delta_2 = \left\{ (0,0) , (4,0), (0,4) \right\}$. 
Following \cref{algorithm:MMS1}, we compute that:
\begin{align*}
	\Delta_1^* \ = \ \left\{  (0,0),(1,2),(2,1),(2,4),(3,3),(4,2)  \right\} = \Delta_1 \cup \MidD(\Delta_1), 
\end{align*}
and 
\begin{align*}
	 \Delta_2^* \ = \ \conv(\Delta_2)\cap \Z^2.
\end{align*}
$\Delta_1$ is an example of an $M$-simplex, and arises from the simplicial set associated to the \textit{Motzkin polynomial} given in \cref{Equ:MotzkinPolynomial}. In addition to its historic importance, it is the unique $M$-simplex among the 2-simplicial sets with maximal degree 6.
$\Delta_2$ is an example of $H$-simplex. It arises from a factor-2 scaling of the simplicial set associated to the \textit{Hurwitz form} given in \cref{Equ:HurwitzPolynomial} where $2n =2$.
See \cref{Figure:MMSExample} for the visualization of  $\Delta_1^*$ and $\Delta_2^*$.
\end{example}

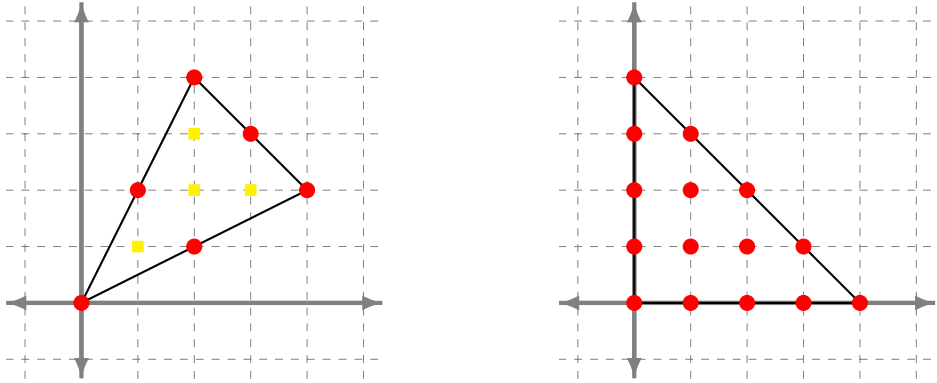
\begin{figure}[t]
	\begin{minipage}{0.45\textwidth}
		\begin{center}
			\begin{tikzpicture}
			\coordinate (Origin)   at (0,0);
			\coordinate (XAxisMin) at (-1,0);
			\coordinate (XAxisMax) at (4,0);
			\coordinate (YAxisMin) at (0,-1);
			\coordinate (YAxisMax) at (0,4);
			\clip (-1,-1) rectangle (4cm,4cm);
			\pgftransformcm{0.75}{0}{0}{0.75}{\pgfpoint{0cm}{0cm}}  
			\draw [ultra thick,gray,-latex] (XAxisMin) -- (XAxisMax);
			\draw [ultra thick,gray,-latex] (XAxisMax) -- (XAxisMin);
			\draw [ultra thick,gray,-latex] (YAxisMin) -- (YAxisMax);
			\draw [ultra thick,gray,-latex] (YAxisMax) -- (YAxisMin);
			\draw[style=help lines,dashed] (-14,-14) grid[step=1cm] (14,14);
			\draw [thick,black] (0,0) -- (2,4)
			-- (4,2) node {} [below right]--cycle {};
			\node[draw,circle,red,inner sep=2pt,fill] at (2,4) {};
			\node[draw,circle,red,inner sep=2pt,fill] at (4,2) {};
			\node[draw,circle,red,inner sep=2pt,fill] at (0,0) {};
			\node[draw,circle,red,inner sep=2pt,fill] at (1,2) {};
			\node[draw,circle,red,inner sep=2pt,fill] at (2,1) {};
			\node[draw,circle,red,inner sep=2pt,fill] at (3,3) {};
			\node[draw,rectangle,yellow,inner sep=2pt,fill] at (1,1) {};
			\node[draw,rectangle,yellow,inner sep=2pt,fill] at (2,2) {};
			\node[draw,rectangle,yellow,inner sep=2pt,fill] at (2,3) {};
			\node[draw,rectangle,yellow,inner sep=2pt,fill] at (3,2) {};
			\end{tikzpicture}
		\end{center}
	\end{minipage}
	\begin{minipage}{0.45\textwidth}
		\begin{center}
			\begin{tikzpicture}
			\coordinate (Origin)   at (0,0);
			\coordinate (XAxisMin) at (-1,0);
			\coordinate (XAxisMax) at (4,0);
			\coordinate (YAxisMin) at (0,-1);
			\coordinate (YAxisMax) at (0,4);
			\clip (-1,-1) rectangle (4cm,4cm);
			\pgftransformcm{0.75}{0}{0}{0.75}{\pgfpoint{0cm}{0cm}}  
			\draw [ultra thick,gray,-latex] (XAxisMin) -- (XAxisMax);
			\draw [ultra thick,gray,-latex] (XAxisMax) -- (XAxisMin);
			\draw [ultra thick,gray,-latex] (YAxisMin) -- (YAxisMax);
			\draw [ultra thick,gray,-latex] (YAxisMax) -- (YAxisMin);
			\draw[style=help lines,dashed] (-14,-14) grid[step=1cm] (14,14);
			\draw [thick,black] (0,0) -- (0,4)
			-- (4,0) node {} [below right]--cycle {};
			\foreach \x in {0,1,2,3,4}
			{\foreach \y in {0,1,2,3,4}
				{\ifthenelse{\mycondition{\x}{\y}}{\node[draw,circle,red,inner sep=2pt,fill] at (\x,\y) {} ;}{}
				}}
				\end{tikzpicture}
			\end{center}
		\end{minipage}
	\caption{\small $\Delta_1$(left) and $\Delta_2$(right) in \cref{Example:MMSExample}, red dots indicate the points that are in the MMS and yellow squares indicate the points that are not in the MMS.}
	\label{Figure:MMSExample}
\end{figure}

\begin{example}
	\label{Example:MMSExample1.5}
	Let $\Delta = \{ (0,0,0), (0,2,2), (2,0,2), (2,2,0)\}$, then $\Delta^*$ attains the lower bound in \cref{Theorem:Reznick:MMSExists}
	\begin{align*}
		\Delta^* \  = \ \conv(\Delta) \cap (\Z^n) - \left\{ (1,1,1) \right\} \ = \ \Delta \cup \MidD(\Delta).
	\end{align*}
	One can verify that $\Delta$ is an $M$-simplex also using a result by Bommel \cite[Theorem 3.6]{Bommel:MasterThesis} and the fact that $\frac{1}{2} \Delta$ is a \struc{\textit{distinct pair-sum (dps) polytope}}; see \cite{Choi:Lam:Reznick:DPSPolytopes} for dps polytopes.
	$\Delta$ is the Newton polytope of 
	\begin{align*}
		1 + x_1^2x_2^2 + x_1^2x_3^2 + x_2^2x_3^2 - 4x_1x_2x_3,
	\end{align*}
	another well-known small example of a nonnegative polynomial that is not as sum of squares, see \cite[Equation (1.4)]{Reznick:AGI} and \cite[Equation (2.1)]{Choi:Lam:AnOldQuestionofHilbert}.
\end{example}

For a general $n$-simplicial set $\Delta$, the MMS does not necessarily attain one of the bounds given in \cref{Theorem:Reznick:MMSExists}, see \cref{Example:MMSExample3} and \cref{Example:MMSExample2}.

\begin{example}
	\label{Example:MMSExample3}
	Let $\Delta = \{ (0,0,0,0), (0,0,0,4) , (0,2,2,0), (2,0,2,0), (2,2,0,0)\}$. The convex hull of $\Delta$ contains 22 integral lattice points. 
	Only two of these integral lattice points are not in $\Delta^*$, namely $(1,1,1,0)$ and $(1,1,1,1)$. One can verify this via our software package discussed in \Cref{Section:Implementation}.

\end{example}

\begin{remark}
We point out that, as a result of the computations we perform, \cref{Example:MMSExample1.5} is the unique 3-simplicial set with maximal degree 4 that attains the lower bound in \cref{Theorem:Reznick:MMSExists}. Furthermore,  \cref{Example:MMSExample3} is the only 4-simplicial set with maximal degree at most 4 such that $\Delta^*$ is strictly between the bounds up to coordinate permutations.
\end{remark}

The next example originates from Bruns and Gubeladze \cite{Bruns:Gubeladze:RectengularSimplicial:1999}. 
It points out a connection between normality and the MMS of a lattice simplex. 
\begin{example}
	\label{Example:MMSExample2}
	Let $\Delta = \{ (0,0,0), (4,0,0), (0,6,0), (0,0,10) \} \subset (2\Z)^3$. 
	In \cite[Example 2.2]{Bruns:Gubeladze:RectengularSimplicial:1999}, the authors point out that the lattice polytope $\frac{1}{2}\conv(\Delta)$ is not normal. In particular, $\frac{1}{2}\conv(\Delta)$ is not 2-normal due to the point $\Vector{p} = (1,2,4)$, i.e., there exists no $\Vector{p_1},\Vector{p_2} \in \N^3 \cap \conv(\Delta)$ such that $p_1+p_2 = p$. Consequently, Theorem 5.9 of \cite{Iliman:deWolff:Circuits} implies that $\Delta$ cannot be an $H$-simplex.
	Indeed, $\Vector{p} = (1,2,4)$ is the only point that is not in the MMS:
	\begin{align*}
		\Delta^* \ = \ \conv(\Delta) - \left\{(1,2,4) \right\}.
	\end{align*}
	We visualize $\Delta^*$ in \cref{Figure:MMS3dEx}.
\end{example}
	
\begin{figure}[t]
	\centering
	\ifpictures
	\includegraphics[width=\linewidth]{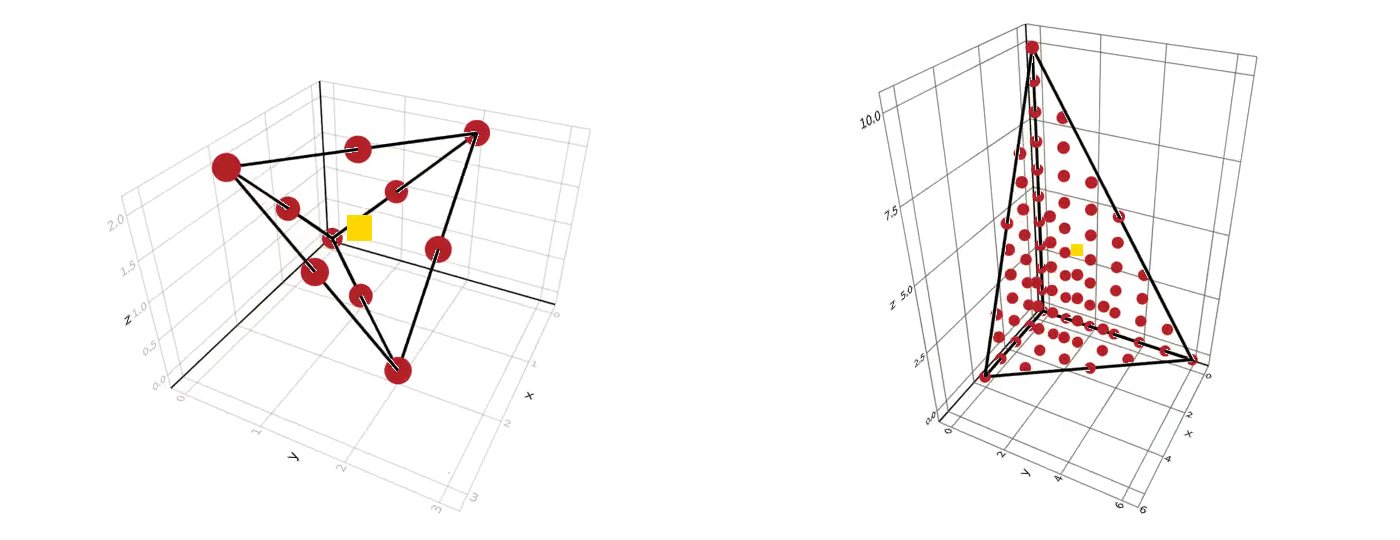}
	\fi
	\caption{ \small MMS of the simplicial sets given in \cref{Example:MMSExample1.5}(left) and \cref{Example:MMSExample2}(right), red dots indicate the points that are in the MMS and yellow squares indicate the points that are not in the MMS.}
	\label{Figure:MMS3dEx}
\end{figure}

For $n = 2$, Reznick stated that $\Delta^*$ is always an $M$-simplex or an $H$-simplex; \cite[Page 9]{Reznick:AGI}.

\begin{conjecture}[Page 9, \cite{Reznick:AGI}]
	\label{Conjecture:ReznickDim2}
	Let $\Delta \subset (2\Z)^2$ be a simplicial set, then $\Delta$ is either an $M$-simplex or an $H$-simplex.
\end{conjecture}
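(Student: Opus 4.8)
The plan for approaching \cref{Conjecture:ReznickDim2} is to reduce to an explicit normal form and then to control the descending process of \cref{algorithm:MMS1}; the reduction is routine, but the last step is where the difficulty lies.

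\emph{Step 1: reduction to a normal form.} By \cref{Corollary:MMSisInvariantofLattice}, whether $\Delta$ is an $M$-simplex or an $H$-simplex depends only on $L_\Delta$ up to a permutation of coordinates, and for a $2$-simplicial set $L_\Delta$ ranges over the rank-two sublattices of $(2\Z)^2$. Passing to a Hermite basis of $L_\Delta$ (and swapping the two coordinates if necessary) shows that, up to $h$-ratio-preserving isomorphism, every $2$-simplicial set equals $\Delta_{p,q,m} := \{(0,0),\,(2p,0),\,(2q,2m)\}$ for unique integers $p,m\geq 1$ and $0\leq q<m$; for instance the Motzkin simplex of \cref{Example:MMSExample} is $\Delta_{1,2,3}$. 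It thus suffices to prove the dichotomy for the family $\{\Delta_{p,q,m}\}$.

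\emph{Step 2: the $H$-criterion.} Since $\MidD(\Delta_{p,q,m})$ consists of the three edge midpoints, \cref{algorithm:MMS1} shows that $\Delta_{p,q,m}$ is an $H$-simplex exactly when $\conv(\Delta_{p,q,m})\cap\Z^2$ is already a fixed point of $L\mapsto\MidD(L)\cup\Delta_{p,q,m}$, equivalently, when every non-vertex lattice point of $\conv(\Delta_{p,q,m})$ is the midpoint of two distinct even lattice points of $\conv(\Delta_{p,q,m})$. The first task is to read off, from the three defining inequalities of $\conv(\Delta_{p,q,m})$ and the parities of the coordinates, the exact arithmetic condition $\mathcal{C}(p,q,m)$ under which this holds; the computations summarised in \cref{table:datainfo_full} should reveal the precise form of $\mathcal{C}$, which one expects to be a finite list of congruences on $p,q,m$ modulo small powers of $2$ (mirroring the $2$-adic nature of repeated halving; compare Reznick's analysis of trellis mediated sets \cite{Reznick:AGI} and Bommel's description of $M$-simplices \cite{Bommel:MasterThesis}). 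The implication that $\mathcal{C}(p,q,m)$ forces $\Delta_{p,q,m}$ to be an $H$-simplex then reduces to producing, for each non-vertex lattice point, an explicit pair of distinct even lattice points of the triangle realising it as their midpoint; the only real difficulty is to make this construction uniform in $(p,q,m)$, which I would do by writing each non-vertex point as a vertex plus a short lattice vector and inducting on its $1$-norm. (The normality obstruction of \cite[Theorem~5.9]{Iliman:deWolff:Circuits} is of no help in this dimension, since every lattice polygon --- in particular $\tfrac{1}{2}\conv(\Delta_{p,q,m})$ --- is normal.)

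\emph{Step 3: the $M$-direction, and the main obstacle.} What remains --- and what has kept this a conjecture since \cite{Reznick:AGI} --- is the converse: when $\mathcal{C}(p,q,m)$ fails, the descending iteration of \cref{algorithm:MMS1} must drop all the way to $\Delta_{p,q,m}\cup\MidD(\Delta_{p,q,m})$ with no intermediate stable set; equivalently, there is no set $L$ with $\Delta_{p,q,m}\cup\MidD(\Delta_{p,q,m})\subsetneq L\subsetneq\conv(\Delta_{p,q,m})\cap\Z^2$ satisfying $\MidD(L)\cup\Delta_{p,q,m}=L$. The natural line of attack is to construct a monovariant: a $2$-adically defined integer function $\psi$ on the lattice points of the triangle that vanishes exactly on $\Delta_{p,q,m}\cup\MidD(\Delta_{p,q,m})$ and such that, in every representation of a point $p$ with $\psi(p)>0$ as a midpoint of two distinct even lattice points, one of the two parents again has positive $\psi$, strictly larger than $\psi(p)$; this would force the iteration to delete the points of largest $\psi$ first and then cascade downward level by level, precluding any intermediate fixed point. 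The obstacle is precisely that no such $\psi$ is currently known: its required properties encode the failure of $\mathcal{C}$, and the parameters $p,q,m$ interact through the carries generated by halving in a way that has so far resisted a closed form. A realistic intermediate target is to settle the conjecture first on natural subfamilies --- for example $p=1$, or $q\in\{0,1\}$ --- or to extend Reznick's $2$-adic machinery for trellises to the present, genuinely two-dimensional, situation.
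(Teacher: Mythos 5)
Your proposal is not a proof, and you say so yourself: the arithmetic criterion $\mathcal{C}(p,q,m)$ of Step 2 is never actually determined, and the monovariant $\psi$ on which Step 3 hinges is never constructed. Those two items are the entire mathematical content of the statement; everything before them --- the reduction via \cref{Corollary:MMSisInvariantofLattice} to the normal forms $\{(0,0),(2p,0),(2q,2m)\}$, and the observation that being an $H$-simplex means $\conv(\Delta)\cap\Z^2$ is a fixed point of the map $L\mapsto\MidD(L)\cup\Delta$ from \cref{algorithm:MMS1} --- is correct but routine. In particular, Step 3 as written is exactly the open problem: ruling out intermediate fixed points of the iteration when $\mathcal{C}$ fails is equivalent to the conjecture itself, so packaging it as ``find a suitable $\psi$'' restates the difficulty rather than resolving it.

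For calibration, the paper does not prove \cref{Conjecture:ReznickDim2} either; it is stated and left as a conjecture. The paper's contribution on this point is an exhaustive computation of all $4266834$ two-dimensional simplicial sets of maximal degree at most $150$, reduced to $886297$ lattice classes by precisely the HNF reduction you describe in Step 1, confirming the dichotomy in every instance; see \cref{table:dim2deg150} and \Cref{Subsection:EvaluationOfTheExperiment}. So your Step 1 coincides with what the authors actually exploit --- it is what makes their computation feasible --- but neither you nor they supply Steps 2 and 3. If you do pursue this, note that any candidate $\mathcal{C}$ must be consistent with \cref{Corollary:Iliman:deWolff:Hsimplex}: if $\frac{1}{2}\conv(\Delta)$ has at least four boundary lattice points, then $\Delta$ is an $H$-simplex, so the $M$-simplices are confined to triangles whose half-scaled boundary contains no lattice points besides the three vertices. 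That is a much smaller family than all of $\{\Delta_{p,q,m}\}$ and is the natural place to start pinning down $\mathcal{C}$ before attempting the monovariant.
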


In this 1989 article, Reznick announced a proof for this claim, and another important result \cite[Proposition 2.7]{Reznick:AGI}, but the particular article was not finished.
Very recently, Powers and Reznick proved \cite[Proposition 2.7]{Reznick:AGI} in \cite{Reznick:Powers:NoteOnMMS}. However, after consulting with the authors we reached a consensus that, their results in \cite{Reznick:Powers:NoteOnMMS} do not solve \cref{Conjecture:ReznickDim2}.

In \cite{Iliman:deWolff:Circuits}, the result Iliman and the third author implies that the most of the 2-simplicial sets are indeed $H$-simplices.

\begin{corollary}[Iliman, dW. \cite{Iliman:deWolff:Circuits}]
	\label{Corollary:Iliman:deWolff:Hsimplex}
	For any 2-simplicial set $\Delta \subset \Z^2$, if $\frac{1}{2} \conv(\Delta)$ has at least 4 integral boundary points, then $\Delta$ is an $H$-simplex 
\end{corollary}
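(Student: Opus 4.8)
My plan is to prove directly that $\conv(\Delta) \cap \Z^2$ is $\Delta$-mediated; since \cref{Theorem:Reznick:MMSExists} always gives $\Delta^* \subseteq \conv(\Delta) \cap \Z^2$, this forces $\Delta^* = \conv(\Delta) \cap \Z^2$, i.e.\ that $\Delta$ is an $H$-simplex. By definition, $\conv(\Delta) \cap \Z^2$ is $\Delta$-mediated exactly when every lattice point of $\conv(\Delta)$ that is not a vertex is the midpoint of two distinct even lattice points of $\conv(\Delta)$. I would pass to the halved triangle $T := \tfrac12\conv(\Delta)$: the even lattice points of $\conv(\Delta) = 2T$ are precisely the doubles of the lattice points of $T$, and the midpoint of $2r_1$ and $2r_2$ is $r_1+r_2$. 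Using the classical fact that every two-dimensional lattice polytope is normal — so $\conv(\Delta)\cap\Z^2 = (T\cap\Z^2) + (T\cap\Z^2)$ — an \emph{odd} lattice point of $\conv(\Delta)$ automatically decomposes into a sum of two necessarily distinct lattice points of $T$, while an \emph{even} lattice point $2m$ meets the condition precisely when $m$ is the midpoint of two distinct lattice points of $T$. Since the vertices of $2T$ are the doubles of those of $T$, the whole statement reduces to: \emph{every non-vertex lattice point of $T$ is the midpoint of two distinct lattice points of $T$}.

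The boundary points of $T$ require no hypothesis: if $m$ lies in the relative interior of an edge $e$ with primitive lattice vector $u$, then $m-u$ and $m+u$ are distinct lattice points of $e \subseteq T$ with midpoint $m$. For interior points I would invoke the assumption. Having at least four integral boundary points forces some edge of $T$ to contain a lattice point in its relative interior, i.e.\ to be non-primitive. As applying an element of $(2\Z)^2 \rtimes \GL(\Z^2)$ to $\Delta$ (equivalently, a unimodular affine map to $T$) sends $\Delta$-mediated sets to mediated sets and so preserves the $H$-simplex property, I may normalize $T = \conv\{(0,0),(k,0),(a,b)\}$ with $k\ge 2$ and $b\ge 1$, so that $(0,0),(1,0),\dots,(k,0) \in T\cap\Z^2$.

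The core of the proof is then the interior lattice points $m=(m_1,m_2)$, where $1\le m_2\le b-1$; for such $m$ I need a nonzero integer vector $v$ with $m+v, m-v \in T$. Write $T\cap\{y=t\}$ as the segment $[L(t),R(t)]$ with $L,R$ affine, $L(0)=0$, $R(0)=k$, $L(b)=R(b)=a$, and width $R(t)-L(t)=k(b-t)/b$. If $2m_2\ge b$, take $v = m-(a,b)$, i.e.\ use the apex: from $L(m_2)\le m_1\le R(m_2)$ and affinity one gets $m+v = 2m-(a,b) = (2m_1-a,\,2m_2-b) \in T$, and $v\neq 0$ since $m_2\le b-1$. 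If $2m_2<b$, take $v=m-(j,0)$ for a base point $(j,0)$ with $j\in\{0,\dots,k\}$; the requirement $m+v=(2m_1-j,\,2m_2)\in T$ amounts to $j$ lying in the interval $I:=[\,2m_1-2R(m_2)+k,\ 2m_1-2L(m_2)\,]$, which has positive length $k(b-2m_2)/b$, right endpoint $\ge 0$ and left endpoint $\le k$. So $j=0$ works if the left endpoint is $\le 0$, $j=k$ works if the right endpoint is $\ge k$, and if $I\subseteq(0,k)$ with length $\ge 1$ then $I$ contains a usable integer.

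The remaining case — $2m_2<b$, with $I\subseteq(0,k)$ of length $<1$ and free of integers, so that $m$ sits within horizontal distance one of a side edge of $T$ at its own height — is the one I expect to be the real obstacle. My plan there is to choose $v$ parallel to that side edge: translating $m$ by the primitive lattice vector of the nearby side edge does not change its distance to that edge, so only the two other edge inequalities must be checked, and the slack is supplied by $k\ge 2$ (a wide base) together with the bound $2m_2<b$ (which keeps $m$ away from the apex); when this is still too tight one falls back on a lattice point one row above the base. Carrying out this sub-case, together with the mirror-image argument when $m$ is close to the other side edge, completes the proof that $\conv(\Delta)\cap\Z^2$ is $\Delta$-mediated, hence that $\Delta$ is an $H$-simplex. (The corollary can likewise be read off from the analysis of $H$-simplices in \cite{Iliman:deWolff:Circuits}.)
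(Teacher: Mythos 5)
Your reduction is sound and correctly executed up to the last case: passing to $T=\tfrac12\conv(\Delta)$, using normality of lattice polygons to dispose of the odd lattice points of $\conv(\Delta)$, handling relative-interior edge points with the primitive edge vector, normalizing a non-primitive edge onto the $x$-axis via \cref{Theorem:MMSPresGroup}, and the reflections through the apex (for $2m_2\ge b$) and through base points all check out. (Note the paper gives no proof of this corollary; it is cited from \cite{Iliman:deWolff:Circuits}, where it is derived from the normality criterion of Theorem 5.9 there.) The genuine gap is the ``remaining case'', which you explicitly leave as a plan --- and the plan, as stated, provably fails. Take $T=\conv\{(0,0),(2,0),(6,7)\}$, so that $\Delta=2T$ satisfies the hypothesis (the base contributes a fourth boundary lattice point), and $m=(3,3)$, an interior lattice point with $2m_2=6<7=b$. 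Here $\alpha=m_1-L(3)=3/7$, $\beta=R(3)-m_1=5/7$, so $I=[4/7,6/7]$ contains no integer; $m\pm(1,0)\notin T$; the apex reflection lands at $(0,-1)$; both side edges are primitive, so ``$v$ parallel to that side edge'' forces $v=(6,7)$ or $v=(4,7)$, and $m\pm v$ leaves $T$ vertically; and the two lattice points $(1,1),(2,1)$ one row above the base reflect to $(5,5)$ and $(4,5)$, neither of which lies in $T$. The point $m$ \emph{is} a midpoint --- of $(2,2)$ and $(4,4)$ --- but that pair sits two rows above the base, in the direction $(1,1)$, which is a good rational approximation to the slope $6/7$ of the nearby edge rather than anything your fallback produces.

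What the example exposes is that in the residual case one must find a primitive $v=(v_1,v_2)$ with $v_2\ge 1$, $\lvert v_1-(a/b)v_2\rvert\le\alpha$, and the analogous right-edge and height constraints; i.e., a Diophantine-approximation (continued-fraction or three-distance type) argument producing a short lattice direction nearly parallel to the nearby edge. That argument is the real content of the statement and is exactly what your sketch omits; Minkowski's theorem applied to the centrally symmetric body $T\cap(2m-T)$ covers the points where that body has area at least $4$, but the hard points are precisely those near the boundary where it does not. Until this case is carried out, the proof is incomplete.
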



In this article, we provide an experimental proof for \cref{Conjecture:ReznickDim2} for simplices of degree at most 150 see \Cref{Subsection:EvaluationOfTheExperiment}.

\section{Theoretical Results}
\label{Section:TheoreticalResults}

In this section we present our theoretical results on maximal mediated sets.	
We start with motivating the central definition of this section.
Let $f \in P_{n,2d}$ be a circuit polynomial supported on a circuit with vertex set $\Delta = \vertices{\newton{f}}$ and inner term $\Vector{\beta}$.
The maximal mediated set associated to $f$, $\Delta^*$, is the set of choices for $\Vector{\beta}$ that ensure $f \in \Sigma_{n,2d}$.
Therefore, the density of $\Delta^*$ in $\conv(\Delta) \cap \Z^n$ is a measure of how likely $f$ is to be a SOS polynomial.
Due to \cref{Theorem:Reznick:MMSExists}, we have
\begin{align*}
\Delta \cup \MidD(\Delta) \subseteq \Delta^*.
\end{align*}
Thus, we exclude these points that are a priori in the MMS while we describe the density of $\Delta^*$ in $\conv(\Delta) \cap \Z^n$.
\begin{definition}
	\label{Definition:hratio}
	Given a simplex $\Delta \subseteq (2\Z)^n$, we define the \textit{\struc{$h$-ratio}} of $\Delta$ as follows:	
	\begin{align*}
	\struc{\Hratio(\Delta)} \ = \begin{cases}
	\ \dfrac{\#\left( \Delta^* -(\Delta \cup \MidD(\Delta)) \right)}{\# \left( (\conv(\Delta)\cap \Z^n)-(\Delta \cup \MidD(\Delta)) \right)} &\text{if } (\conv(\Delta)\cap \Z^n) \neq (\Delta \cup \MidD(\Delta)),\\
	\\
	\ 1  &\text{otherwise}
	\end{cases}	 
	\end{align*} 
\end{definition}

Note that in terms of polynomials, the $h$-ratio of the Newton polytope is an indicator for the likelihood of a nonnegative circuit polynomial to be SOS.

\subsection{MMS Preserving Group Actions}
\label{Subsection:MMSPreservingGroupActions}

We aim to classify $n$-simplicial sets with maximal degree $2d$ according to their $h$-ratio.
Besides understanding what properties determine the $h$-ratio, another benefit of such a classification is the opportunity to reduce the size of the database of maximal mediated sets by storing one representative of each class only. 
Therefore, in this section we study the maps from $\R^n$ to $\R^n$ that preserve the maximal mediated set structure. In \cite[Page 445]{Reznick:AGI} the author points out that, the maps that respect the MMS structure are necessarily linear maps in the context of trellises.
We provide a rigorous proof for this observation in the setting of simplicial sets and $h$-ratios.
In particular, we are interested in the following maps.

\begin{definition}
\label{Definition:MMSpres}
A function $T : \R^n \to \R^n$ is called \textit{\struc{maximal mediated set preserving (MMS preserving)}} if and only if it satisfies the following properties for every $k$-dimensional simplicial set $\Delta \subseteq (2\Z)^n$:
\begin{enumerate}
\item $T(\Delta) \subseteq (2\Z)^n$ is a $k$-dimensional simplicial set in $\Rn$.
\item For every $\Vector{q} \in T(\Delta)^*$, there exists a unique $\Vector{p} \in \Delta^*$ such that $T(\Vector{p}) = \Vector{q}$.
\item For every $\Vector{q}\in (\conv(T(\Delta))\cap \Z^n)$, there exists a unique $\Vector{p}\in (\conv(\Delta)\cap \Z^n)$ such that $T(\Vector{p})=\Vector{q}$.
\end{enumerate}
\end{definition}
\cref{Definition:MMSpres} has some immediate implications for every MMS preserving function $T$. 
Due to the first property with $k = 0$ we have: 
\begin{align}
\label{Eqn:MMSPres2Z}
\Vector{p}\in (2\Z)^n \implies T(\Vector{p})\in (2\Z)^n.
\end{align}
The second and the third property respectively ensure for every $k$-dimensional simplicial set $\Delta$ that 
\begin{align*}
	\#\Delta^* \ = \ \#T(\Delta)^* \ \text{ and } \ \#(\conv(\Delta) \cap \Z^n) \ = \ \#(\conv(T(\Delta))\cap \Z^n).
\end{align*}
Hence, the $h$-ratio is invariant under a maximal mediated set preserving function $T$. 
 
Note that the property (1) of \cref{Definition:MMSpres} is equivalent to $T$ mapping any $k$-dimensional affine independent subset of $(2\Z)^n$ to a $k$-dimensional affine independent set of $(2\Z)^n$.
This means the restriction of $T$ to $(2\Z)^n$ is an affine transformation of $(2\Z)^n$.
The next proposition generalizes this result to $\R^n$.

\begin{proposition}
\label{Proposition:MMSPresAffine}
Let $T:\R^n \to \R^n$ be a maximal mediated set preserving function, then $T$ is a unimodular affine transformation. More specifically, we have
\begin{align}
T(\Vector{x}) \ = \ A_T\Vector{x}+\Vector{b}_T
\label{Eqn:AffineTrans}
\end{align}
with $\Vector{b}_T \in (2\Z)^n$, $A_T \in \Z^{n \times n}$, and $\det(A) = \pm 1$.
\end{proposition}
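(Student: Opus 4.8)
\emph{Proof strategy.} The plan is to read a candidate affine map off the action of $T$ on one fixed affine frame, to show $T$ coincides with it on the integer lattice, and then to identify the determinant. By property~(1) of \cref{Definition:MMSpres} with $k=0$, $T$ maps even points to even points, and with $k=n$ applied to $\Delta_{0}=\{\mathbf{0},2\mathbf{e}_{1},\dots,2\mathbf{e}_{n}\}$ the images $T(\mathbf{0}),T(2\mathbf{e}_{1}),\dots,T(2\mathbf{e}_{n})$ are affinely independent and lie in $(2\Z)^{n}$. Put $\mathbf{b}_{T}:=T(\mathbf{0})\in(2\Z)^{n}$, let $A_{T}\in\Z^{n\times n}$ be the matrix with $i$-th column $\tfrac12\bigl(T(2\mathbf{e}_{i})-T(\mathbf{0})\bigr)$ (so $\det A_{T}\neq0$ by affine independence), and set $S(\mathbf{x}):=A_{T}\mathbf{x}+\mathbf{b}_{T}$. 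Then $S$ and $T$ agree on $\Delta_{0}$, and the integrality claims $A_{T}\in\Z^{n\times n}$, $\mathbf{b}_{T}\in(2\Z)^{n}$ are built in; it remains to prove $T=S$ and $\det A_{T}=\pm1$.

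The heart of the argument is a rigidity statement coming from property~(3): for distinct $\mathbf{a},\mathbf{b}\in(2\Z)^{n}$ whose midpoint $\mathbf{m}=\tfrac12(\mathbf{a}+\mathbf{b})$ again lies in $(2\Z)^{n}$, one has $T(\mathbf{m})=\tfrac12\bigl(T(\mathbf{a})+T(\mathbf{b})\bigr)$. Indeed, $\{\mathbf{a},\mathbf{b}\}$, $\{\mathbf{a},\mathbf{m}\}$ and $\{\mathbf{m},\mathbf{b}\}$ are $1$-simplicial sets, so by property~(1) their images are segments with even endpoints, and by property~(3) $T$ induces bijections between the lattice points of each segment and those of its image; since $\mathbf{m}$ splits $[\mathbf{a},\mathbf{b}]\cap\Z^{n}$ into two halves of equal cardinality and $T$ acts on all three segments by restriction of the same map, $T(\mathbf{m})$ must be the lattice point of $[T(\mathbf{a}),T(\mathbf{b})]$ splitting it symmetrically, that is, its midpoint. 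Taking $\mathbf{a}=\mathbf{u}$, $\mathbf{b}=2\mathbf{v}-\mathbf{u}$, $\mathbf{m}=\mathbf{v}$, this gives $T(2\mathbf{v}-\mathbf{u})=2T(\mathbf{v})-T(\mathbf{u})$ for all distinct even $\mathbf{u},\mathbf{v}$. Now iterated reflections of $\Delta_{0}$ of this kind produce all of $4\Z^{n}$ (first every even multiple of each $\mathbf{e}_{i}$, then arbitrary translations by vectors of $4\Z^{n}$), and then a single midpoint step applied to pairs in $4\Z^{n}$ produces all of $(2\Z)^{n}$; since $S$, being affine, respects reflections and midpoints, it follows that $T=S$ on $(2\Z)^{n}$. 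Finally, any $\mathbf{p}\in\Z^{n}\setminus(2\Z)^{n}$ can be written as $\tfrac12\bigl((\mathbf{p}+\mathbf{c})+(\mathbf{p}-\mathbf{c})\bigr)$ with a suitable $\mathbf{c}\in\{-1,0,1\}^{n}\setminus\{\mathbf{0}\}$, so that $\mathbf{p}\pm\mathbf{c}$ are even and the segment $[\mathbf{p}-\mathbf{c},\mathbf{p}+\mathbf{c}]$ — a $1$-simplicial set — contains exactly the three lattice points $\mathbf{p}-\mathbf{c},\mathbf{p},\mathbf{p}+\mathbf{c}$; applying property~(3) to it, together with $T=S$ on its (even) endpoints, yields $T(\mathbf{p})=S(\mathbf{p})$. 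Hence $T=S$ on $\Z^{n}$, and thus on $\R^{n}$. I expect this rigidity step to be the main obstacle — in particular, checking that the bijections of property~(3) fix the \emph{values} of $T$ and not merely its injectivity, and organizing the generation of the whole lattice from $\Delta_{0}$ by midpoint and reflection steps inside suitable simplicial sets. (Note that properties (1)--(3) only ever evaluate $T$ at integer points, so the passage from $\Z^{n}$ to arbitrary $\mathbf{x}\in\R^{n}$ relies on a tacit regularity of $T$; it is immaterial for our applications, where only $T|_{(2\Z)^{n}}$ enters.)

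To see $|\det A_{T}|=1$, fix a full-dimensional simplicial set $\Delta$, say $\Delta_{0}$. For each $c\in\Z_{>0}$ the dilate $c\Delta$ is again a simplicial set, and since $T$ is affine on $(2\Z)^{n}\supseteq c\Delta$ we have $\conv\bigl(T(c\Delta)\bigr)=c\,A_{T}\conv(\Delta)+\mathbf{b}_{T}$; hence property~(3) gives $\#\bigl(c\,\conv(\Delta)\cap\Z^{n}\bigr)=\#\bigl((c\,A_{T}\conv(\Delta)+\mathbf{b}_{T})\cap\Z^{n}\bigr)$ for every $c$. Dividing by $c^{n}$ and letting $c\to\infty$, the left side tends to $\operatorname{vol}\bigl(\conv(\Delta)\bigr)$ and the right side to $\operatorname{vol}\bigl(A_{T}\conv(\Delta)\bigr)=|\det A_{T}|\operatorname{vol}\bigl(\conv(\Delta)\bigr)$; since $\operatorname{vol}\bigl(\conv(\Delta)\bigr)>0$, we conclude $|\det A_{T}|=1$.
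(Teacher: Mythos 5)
Your proof is correct in substance, but it takes a genuinely different route from the paper's for the central step, namely showing that $T$ is affine. The paper argues top-down: preservation of affine independence on $(2\Z)^n$ is promoted to $\Q^n$ by scaling and to $\R^n$ by density, whence $T$ is declared affine, and only then are the integrality of $A_T$, $\Vector{b}_T$ and the behaviour on odd points deduced. You instead build the candidate affine map $S$ explicitly from the frame $\{\Vector{0},2\Vector{e_1},\dots,2\Vector{e_n}\}$ and prove $T=S$ on $\Z^n$ by a lattice-point-counting rigidity argument: property (3) of \cref{Definition:MMSpres} applied to the segments $[\Vector{a},\Vector{b}]$, $[\Vector{a},\Vector{m}]$, $[\Vector{m},\Vector{b}]$ forces $T$ to respect midpoints of even points with even midpoint (since lattice points on a segment between lattice points are equally spaced), and reflections plus midpoints generate $(2\Z)^n$ from the frame; odd points are then handled by the same $\Vector{p}=\tfrac{1}{2}(\Vector{p^+}+\Vector{p^-})$ device as in the paper, except that you again use the three-lattice-point counting argument where the paper simply invokes the already-established affineness. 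What your version buys is that it is self-contained on the lattice and sidesteps the two delicate points in the paper's opening step (rescaling $K\subset\Q^n$ into $(2\Z)^n$ only controls $T(\lambda K)$, not $T(K)$, and the density argument needs some continuity of $T$); what it costs is the extra bookkeeping in the generation argument. The determinant step is the same in both proofs — preservation of lattice-point counts of dilates forces $\lvert\det A_T\rvert=1$ — merely spelled out by you via Ehrhart asymptotics. Your closing caveat, that the definition only ever constrains $T$ on $\Z^n$ so the extension to all of $\R^n$ rests on an unstated regularity assumption, is accurate; the same caveat applies to the paper's own proof, and it is harmless for the way the proposition is used.
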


\begin{proof}
Assume that $T$ is MMS preserving and let $K \subset \R^n$ be a collection of affine independent vectors. First, we show that $T(K)$ is also affine independent. This is true by \cref{Definition:MMSpres} for $K \subset (2\Z)^n$ and thus also for $K \subset \Q^n$ with a suitable scaling of the elements of $K$.
Since $\Q^n$ is a dense subset of $\R^n$, we conclude that $T$ is an affine transformation over $\R^n$.

Due to \cref{Eqn:MMSPres2Z} we know $T\left((2\Z)^n \right) = (2\Z)^n$. 
In particular, we have,
\begin{align*}
	T(\Vector{0}) \ = \ \Vector{b}_T \in (2\Z)^n.
\end{align*}

Now we prove that $T(\Z^n)=\Z^n$, i.e., $A_T \in \Z^{n \times n}$.
Let $\Vector{p} = (p_1, \dots ,p_n) \in \Z^n$, and $J \subset [n]$ be the subset of indices where $p_i$ is odd.
We define two points $\Vector{p^+}$ and $\Vector{p^-}$ as follows:
\begin{align*}
p^+_i \ = \
\begin{cases}
p_i, \ \text{if } i\notin J\\
p_i+1, \ \text{if } i\in J
\end{cases}
\text{, and \ }
p^-_i \ = \
\begin{cases}
p_i, \ \text{if } i\notin J\\
p_i-1, \ \text{if } i\in J
\end{cases} 
\end{align*}
for all $i \in [n]$. Observe that $\Vector{p^+},\Vector{p^-} \in (2\Z)^n$ and $\Vector{p}  = \dfrac{\Vector{p^+}+\Vector{p^-}}{2}$.
Since $T$ is affine we have,
\begin{align*}
T(\Vector{p}) \ = \ T \left(\frac{\Vector{p^+}+\Vector{p^-}}{2}\right) \ = \ \frac{T(\Vector{p^+})+T(\Vector{p^-})}{2}.
\end{align*}
Due to \cref{Eqn:MMSPres2Z}, we have $T(\Vector{p^+}), T(\Vector{p^-}) \in (2\Z)^n$, and thus $T(\Vector{p}) \in \Z^n$.
Therefore, $T(\Z^n) = \Z^n$ and hence $A_T \in \Z^{n \times n}$.

Finally, we show $\det(A_T) = \pm1$. By part (1) of \cref{Definition:MMSpres}, $T$ maps any set of $n$ linearly independent vectors to another set of $n$ linearly independent vectors. Thus, $\det(A_T) \neq 0$. By part (3) of \cref{Definition:MMSpres} volumes of simplices are preserved under $T$, and hence $\det(A_T) = \pm 1$.
\end{proof}
Recall that any affine linear transformation can be represented as an element of the group $\R^n \rtimes \GL(\R^n)$. 
In particular, if $T$ is MMS preserving, then we have $T \in (2\Z)^n \rtimes \GL(\Z^n)$ due to \cref{Proposition:MMSPresAffine}. 
This motivates the following definition.

\begin{definition}
We define the \struc{\textit{maximum mediated set preserving group}} of $\R^n$, $\MMSPreservingGroup$ as follows:
\begin{align*}
\struc{\MMSPreservingGroup} \ = \ \left(\left\{ T\in (2\Z)^n\rtimes \GL(\Z^n) \ \mid \text{ $T$ is maximal mediated set preserving }\right\},\circ\right),
\end{align*}
where $\struc{\circ}$ denotes the usual composition of functions.
\end{definition}

Obviously, $\MMSPreservingGroup$ is a subgroup of $(2\Z)^n \rtimes \GL(\Z^n)$. 
In the next theorem we show that it is in fact the full group.

\begin{theorem}
\label{Theorem:MMSPresGroup}
$T \in \MMSPreservingGroup$ if and only if $T \in (2\Z)^n \rtimes \GL(\Z^n)$, i.e., $T : \R^n \to \R^n$ is MMS preserving if and only if
\begin{align*}
T(\Vector{x}) \ = \ A_T\Vector{x}+\Vector{b}_T
\end{align*}
is a unimodular affine transformation with $\Vector{b}_T \in (2\Z)^n$.
\end{theorem}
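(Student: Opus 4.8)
Since \cref{Proposition:MMSPresAffine} already gives the implication ``$T$ maximal mediated set preserving $\implies$ $T \in (2\Z)^n \rtimes \GL(\Z^n)$'', the plan is to prove the converse: every unimodular affine transformation $T(\Vector{x}) = A_T\Vector{x} + \Vector{b}_T$ with $A_T \in \GL(\Z^n)$ and $\Vector{b}_T \in (2\Z)^n$ satisfies the three conditions in \cref{Definition:MMSpres}. First I would collect the structural consequences of this normal form: because $A_T^{-1} \in \Z^{n\times n}$ and both $\Vector{b}_T$ and $A_T^{-1}\Vector{b}_T$ lie in $(2\Z)^n$, the inverse $T^{-1}(\Vector{y}) = A_T^{-1}\Vector{y} - A_T^{-1}\Vector{b}_T$ has the same shape as $T$. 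Hence $T$ is an affine bijection of $\R^n$ which restricts to a bijection of $\Z^n$ onto itself and, crucially, to a bijection of $(2\Z)^n$ onto itself (if $\Vector{v} = 2\Vector{w} \in (2\Z)^n$ then $A_T\Vector{v} = 2A_T\Vector{w} \in (2\Z)^n$, and symmetrically for $T^{-1}$). Condition~(1) of \cref{Definition:MMSpres} is then immediate, since an invertible affine map preserves affine independence and dimension, and $T$ maps $(2\Z)^n$ into $(2\Z)^n$.

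The core step is to show that $T$ commutes with the two operations appearing in \cref{algorithm:MMS1}. For the convex-hull step this is elementary: $T$ is an affine bijection, so $T(\conv(\Delta)) = \conv(T(\Delta))$, and $T(\Z^n) = \Z^n$, so $T\bigl(\conv(\Delta)\cap\Z^n\bigr) = \conv(T(\Delta))\cap\Z^n$. For the midpoint step I would prove that $T(\MidD(L)) = \MidD(T(L))$ for every $L \subseteq \Z^n$; this is exactly where the bijection of $(2\Z)^n$ is used — affinity gives $T\bigl(\tfrac{1}{2}(\Vector{s}+\Vector{t})\bigr) = \tfrac{1}{2}(T(\Vector{s})+T(\Vector{t}))$, injectivity keeps $\Vector{s}\neq\Vector{t}$, and the fact that $T$ and $T^{-1}$ both preserve $(2\Z)^n$ identifies the index sets $L\cap(2\Z)^n$ and $T(L)\cap(2\Z)^n$ under $T$.

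With these two intertwining identities in hand, an induction on $i$ through \cref{algorithm:MMS1} — base case $T(\Delta^0) = \conv(T(\Delta))\cap\Z^n = (T(\Delta))^0$ from the convex-hull identity, step $T(\Delta^i) = T(\MidD(\Delta^{i-1}))\cup T(\Delta) = \MidD(T(\Delta^{i-1}))\cup T(\Delta) = \MidD\bigl((T(\Delta))^{i-1}\bigr)\cup T(\Delta) = (T(\Delta))^i$ — shows $T(\Delta^i) = (T(\Delta))^i$ for all $i$. Since $T$ is injective the two sequences stabilize at the same index, and \cref{Theorem:Reznick:MMSExists} together with \cref{algorithm:MMS1} identifies the stable value with the respective maximal mediated set, so $T(\Delta^*) = (T(\Delta))^*$. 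Conditions~(2) and~(3) then follow at once: any $\Vector{q} \in (T(\Delta))^* = T(\Delta^*)$ has a preimage in $\Delta^*$ which is unique because $T$ is injective, and the same argument with $\conv(\cdot)\cap\Z^n$ in place of $(\cdot)^*$ gives~(3). I do not expect a genuine obstacle in this direction; the only point demanding care — and the reason \cref{Proposition:MMSPresAffine} carries the weight while this converse does not — is to establish that $T$ \emph{and} $T^{-1}$ respect the even sublattice before manipulating $\MidD$, so that the identity $T(\MidD(L)) = \MidD(T(L))$ is actually available.
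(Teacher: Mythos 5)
Your proposal is correct and follows essentially the same route as the paper: the forward direction is delegated to \cref{Proposition:MMSPresAffine}, and the converse is proved by showing that $T$ intertwines the iterations of \cref{algorithm:MMS1} (the paper's sets $U^k$, $V^k$ are exactly your $\Delta^i$, $(T(\Delta))^i$) via induction, using that $T$ and $T^{-1}$ are affine, unimodular, and preserve $\Z^n$ and $(2\Z)^n$. Your explicit isolation of the identity $T(\MidD(L)) = \MidD(T(L))$ — and the observation that it requires both $T$ and $T^{-1}$ to respect the even sublattice — is the same point the paper handles inline in its inductive step, stated a little more carefully.
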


\begin{proof}
Let $T \in \MMSPreservingGroup$ , the only if part of the theorem follows from \cref{Proposition:MMSPresAffine}.
For the converse, assume that $T : \R^n \to \Rn$ is a unimodular affine transformation with $\Vector{b}_T \in (2\Z)^n$.
We need to show that $T$ is MMS preserving.
Let $\Delta = \{ \Vector{v_0}, \dots, \Vector{v_k} \}$ be a $k$-simplicial set.
By definition, $\Vector{v_0}, \dots, \Vector{v_k}$ are affine independent, and, since $T$ is affine, $T(\Delta)$ is a set of $k+1$ affine independent vectors, i.e., a $k$-simplicial set.
Furthermore, if $\Delta \subset (2\Z)^n$, then $T(\Delta) \subset (2\Z)^n$ because
\begin{align*}
T(\Vector{v_i}) \ = \ A_T \Vector{v_i} + \Vector{b}_T \in (2\Z)^n.
\end{align*}
This implies part (1) of \cref{Definition:MMSpres}.

Next we show part (3) of \cref{Definition:MMSpres}:
Let $\Vector{q} \in \conv(T(\Delta))\cap \Z^n$.
Since $T$ is unimodular, $T^{-1}$ exists and is also unimodular.
This implies that $T^{-1}(\Vector{q}) \in \Z^n$.
Furthermore since $\Vector{q} \in \conv(T(\Vector{v_0}),\dots, T(\Vector{v_n}))$, there exists $\lambda_0,\ldots,\lambda_n \in \R_{> 0}$ with $\sum_{i=0}^n \lambda_i = 1$ and $\Vector{q} = \sum_{i=0}^n \lambda_i T(\Vector{v_i})$.
Then,
\begin{align*}
T^{-1}(\Vector{q}) & \ = \ T^{-1} \left( \sum_{i=0}^n \lambda_iT(\Vector{v_i}) \right)
						  \ = \ \sum_{i=0}^n \lambda_iT^{-1}(T(\Vector{v_i}))
						  \ = \ \sum_{i=0}^n \lambda_i\Vector{v_i} \in \conv(\Vector{v_0},\dots,\Vector{v_k}).
\end{align*}
Therefore, for all $\Vector{q} \in \conv(T(\Delta)) \cap \Z^n$, there exists a unique $\Vector{p} = T^{-1} (\Vector{q})\in \conv(\Delta) \cap \Z^n$. 

Finally, we show part (2) of \cref{Definition:MMSpres}:
Since $T$ maps $\conv(\Delta) \cap \Z^n$ to $\conv(T(\Delta)) \cap \Z^n$ bijectively, we are done if we show that $\Vector{p} \in \Delta^*$ if and only if $T(\Vector{p}) \in T(\Delta)^*$.
Define the sets $\struc{U^0} = \conv(\Delta) \cap \Z^n$, $\struc{V^0} = \conv(T(\Delta)) \cap \Z^n$, and recursively define the sets $U^k$ and $V^k$ as follows:
\begin{align*}
\struc{U^k} \ = \ \MidD(U^{k-1}) \cup \Delta, \qquad \struc{V^k} \ = \ \MidD(V^{k-1}) \cup T(\Delta).
\end{align*}
By \cref{algorithm:MMS1}, we know that $\Vector{p} \in \Delta^*$ if and only if $\Vector{p} \in U^k$ for all $k$ and $T(\Vector{p}) \in T(\Delta)^*$ if and only if $T(\Vector{p}) \in V^k$ for all $k$.
We claim that $T(U^k) = V^k$ is a bijection for all $k\in \N$ and we argue by induction over $k$. 
We already know that $T(U^0)=V^0$ is a bijection.
Now assume that $T$ sends $U^k$ to $V^k$ bijectively, and let $\Vector{q}$ be a point in $V^{k+1}$. 
Then either $\Vector{q} \in \MidD(V^k)$ or $\Vector{q} \in T(\Delta)$.
On the one hand, if $\Vector{q} \in T(\Delta)$, then $\Vector{q}$ is a vertex of $\conv(T(\Delta))$ and hence there exists a unique $\Vector{p} \in \Delta$ with $T(\Vector{p}) \ = \ \Vector{q}$ by part (1) of \cref{Definition:MMSpres}, which we have already shown to hold.

On the other hand, if $\Vector{q} \in \MidD(V^k)$, then there exist distinct $\Vector{\tilde{s}}, \Vector{\tilde{t}} \in V^k$ such that
\begin{align*}
\Vector{q} \ = \ \frac{1}{2}(\Vector{\tilde{s}} + \Vector{\tilde{t}}).
\end{align*} 
By the induction assumption, $\Vector{\tilde{s}}$ and $\Vector{\tilde{t}}$ have unique preimages $\Vector{s}$ and $\Vector{t}$ in $U^k$ respectively.
Since $T^{-1}$ is affine linear, we obtain a unique
\begin{align*}
\Vector{p} \ = \ T^{-1}(\Vector{q}) \ = \ T^{-1}\left(\frac{1}{2}(\Vector{\tilde{s}} + \Vector{\tilde{t}})\right) \ = \ \frac{1}{2}\left(T^{-1}(\Vector{\tilde{s}}) + T^{-1}(\Vector{\tilde{t}})\right) \ = \ \frac{1}{2}(\Vector{s} + \Vector{t}).
\end{align*}
Thus, for all $k$, $T$ maps $U^k$ to $V^k$ bijectively.
Hence, we conclude $\Vector{p} \in \Delta^*$ if and only if $T(\Vector{p}) \in T(\Delta)^*$.
\end{proof}

We present a key corollary of \cref{Theorem:MMSPresGroup}:
If we exclude the translations, then an MMS preserving function is given by a unimodular matrix $A_T$.
We show that the row span of $A_T M_{\Delta}$ yields, up to a permutation of the coordinates, the same lattice $L_{\Delta}$ as the row span of $M_{\Delta}$.
Thus, the maximal mediated set of a $k$-simplicial set $\Delta$ containing the origin is an invariant of the lattice generated by the rows of $M_{\Delta}$.

\begin{corollary}
\label{Corollary:MMSisInvariantofLattice}
Given two $k$-simplicial sets $\Delta_1 = \left\{\Vector{0}, \Vector{v_1}, \dots, \Vector{v_k} \right\}$ and $\Delta_2 = \left\{ \Vector{0}, \Vector{u_1}, \dots, \Vector{u_k} \right\}$, there exists a $T\in \MMSPreservingGroup$ with $\Vector{b}_T = \Vector{0}$ such that
\begin{align*}
T(\Delta_1)^* \ = \ \Delta_2^*
\end{align*}
if and only if the lattices $L_{\Delta_1}$ and $L_{\Delta_2}$ share the same Hermite Normal Form up to a permutation of columns.  
\end{corollary}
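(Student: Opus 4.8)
The plan is to reduce the displayed MMS-equality to a set equality $T(\Delta_1)=\Delta_2$, translate that into an identity between the generating matrices $M_{\Delta_1}$ and $M_{\Delta_2}$ up to a column permutation, and finally read it off at the level of the row lattices $L_{\Delta_1}$, $L_{\Delta_2}$ via the Hermite normal form.

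First I would record the reduction. For any simplicial set $\Delta$ the inclusions $\Delta\subseteq\Delta^*\subseteq\conv(\Delta)\cap\Z^n$ give $\conv(\Delta^*)=\conv(\Delta)$, so the vertex set of $\conv(\Delta^*)$ is exactly $\Delta$. If $T\in\MMSPreservingGroup$ with $\Vector{b}_T=\Vector{0}$, then $T(\Delta_1)$ is again a simplicial set in $(2\Z)^n$ by part~(1) of \cref{Definition:MMSpres}, so applying the previous remark to $\Delta_2$ and to $T(\Delta_1)$ shows that $T(\Delta_1)^*=\Delta_2^*$ forces $T(\Delta_1)=\Delta_2$. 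The converse implication is immediate, since $\Delta^*$ depends only on $\Delta$ as a set. Hence it suffices to characterise when there is a $T\in\MMSPreservingGroup$ with $\Vector{b}_T=\Vector{0}$ and $T(\Delta_1)=\Delta_2$.

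Now I would write $T(\Vector{x})=A_T\Vector{x}$ with $A_T$ unimodular (\cref{Proposition:MMSPresAffine}, using $\Vector{b}_T=\Vector{0}$), and let $B_i$ be the matrix $M_{\Delta_i}$ with its zero first column removed, so the columns of $B_i$ are the nonzero points of $\Delta_i$ and $L_{\Delta_i}$ is the lattice generated by the rows of $B_i$. Since $T$ is linear it fixes $\Vector{0}$, so $T(\Delta_1)=\Delta_2$ is equivalent to the existence of a permutation matrix $P$ with $A_TB_1=B_2P$. Left multiplication by the unimodular matrix $A_T$ leaves the row lattice unchanged, while right multiplication by $P$ permutes the coordinates; therefore $L_{\Delta_1}$ is the image of $L_{\Delta_2}$ under the coordinate permutation induced by $P$. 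Since the Hermite normal form is a complete invariant of a lattice under change of basis, and the coordinate permutation corresponds to a column permutation of any generating matrix, this is precisely the statement that $L_{\Delta_1}$ and $L_{\Delta_2}$ have the same Hermite normal form up to a permutation of columns; this gives the forward direction. The converse would run backwards: from the Hermite normal form condition one obtains a permutation matrix $P$ with $L_{\Delta_1}$ equal to the lattice generated by the rows of $B_2P$, and since $B_1$ and $B_2P$ are integer matrices of full column rank generating the same lattice they satisfy $UB_1=B_2P$ for some unimodular $U$ (row-reduce both to the common Hermite normal form). Then $T(\Vector{x}):=U\Vector{x}$ has $A_T=U$ unimodular and $\Vector{b}_T=\Vector{0}\in(2\Z)^n$, so $T\in\MMSPreservingGroup$ by \cref{Theorem:MMSPresGroup}, while $A_TB_1=B_2P$ together with $A_T\Vector{0}=\Vector{0}$ yields $\{A_T\Vector{v_1},\dots,A_T\Vector{v_k}\}=\{\Vector{u_1},\dots,\Vector{u_k}\}$, hence $T(\Delta_1)=\Delta_2$ and so $T(\Delta_1)^*=\Delta_2^*$.

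The only genuinely delicate point is the bookkeeping with the permutation matrix $P$: linear maps need not respect the lexicographic ordering used to build $M_{\Delta_i}$, which is exactly why ``up to a permutation of columns'' appears, and one must keep the $\GL(\Z^n)$-action (change of lattice basis, a left multiplication) cleanly separated from the $S_n$-action (relabelling of coordinates, a right multiplication by $P$) so that the Hermite normal form plays its role as a complete invariant of the former. The reduction to the set equality $T(\Delta_1)=\Delta_2$, and the appeal to \cref{Theorem:MMSPresGroup} to upgrade a bare unimodular matrix to an element of $\MMSPreservingGroup$, are routine.
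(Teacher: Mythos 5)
Your argument is correct and follows essentially the same route as the paper: translate the hypothesis into the matrix identity $A_T M_{\Delta_1} = M_{\Delta_2}$ (up to reordering vertices), observe that left multiplication by a unimodular matrix preserves the row lattice while reordering vertices permutes coordinates, and in the converse direction build $T$ from a unimodular matrix and invoke \cref{Theorem:MMSPresGroup}. You are in fact more careful than the paper on two points it glosses over — the reduction from $T(\Delta_1)^*=\Delta_2^*$ to $T(\Delta_1)=\Delta_2$ via $\vertices{\conv(\Delta^*)}=\Delta$, and the explicit permutation matrix $P$ accounting for the reordering of vertices/columns — so no changes are needed.
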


\begin{proof}
If there exists $T \in \MMSPreservingGroup$ with $\Vector{b}_T= \Vector{0}$ and $T(\Delta_1)^* = \Delta_2^*$, then $M_{\Delta_1} = A_T M_{\Delta_2}$ where $A_T \in \GL(\Z^n)$.
Therefore, the $\Z$-row span of $M_{\Delta_1}$ and $M_{\Delta_2}$ yields, up to a permutation of the coordinates, the same lattice.
In converse, if the lattices $L_{\Delta_1}$ and $L_{\Delta_2}$ share the same Hermite Normal Form up to a permutation of columns, then the $\Z$-row spans of $M_{\Delta_1}$ and $M_{\Delta_2}$ coincide up to a permutation of columns. Hence, there exists $A \in \GL(\Z^n)$ such that $M_{\Delta_1} = A M_{\Delta_2}$.
Thus, the transformation $T(\Vector{x}) = A \Vector{x}$ is MMS preserving.
\end{proof}

\begin{example}
\label{Example:MMSInvariantofLattice}
Let 
\begin{align*}
\Vector{0} =\begin{bmatrix}
0 \\
0
\end{bmatrix},
\Vector{v_1} =\begin{bmatrix}
2 \\
4
\end{bmatrix},
\Vector{v_2} =\begin{bmatrix}
4 \\
2
\end{bmatrix},
\Vector{u_1} =\begin{bmatrix}
2 \\
0
\end{bmatrix},
\Vector{u_2} =\begin{bmatrix}
4 \\
6
\end{bmatrix},
\end{align*}
and $\Delta_1, \Delta_2 \subset (2\Z)^2$ be given as $\Delta_1 \ = \ \left\{\Vector{0},\Vector{v_1} , \Vector{v_2} \right\}$ and $\Delta_2 \ = \ \left\{\Vector{0},\Vector{u_1} , \Vector{u_2} \right\}$.
Then we write the matrices $M_{\Delta_1}$ and $M_{\Delta_2}$ with the given order(see \cref{Figure:CorollaryEx1});
\begin{align*}
M_{\Delta_1} \ = \ \begin{bmatrix}
2& 4 \\
4& 2
\end{bmatrix} \quad
\text{ and } \quad
M_{\Delta_2} \ = \ \begin{bmatrix}
2& 4 \\
0& 6
\end{bmatrix}.
\end{align*}

Note that $M_{\Delta_2} \ = \ A M_{\Delta_1}$ , where 
\begin{align*}
A \ = \ \begin{bmatrix}
1&  0 \\
2& -1
\end{bmatrix}
\end{align*} 
is a unimodular matrix.
Indeed, the lattices corresponding to the row spans of $M_{\Delta_1}$ and $M_{\Delta_1}$ share the same Hermite Normal Form
\begin{align*}
H_1 = \begin{bmatrix}
2& 4 \\
0& 6
\end{bmatrix}
\end{align*}
and they are the same lattice, i.e., 
\begin{align*}
L_{\Delta_1} \ = \ \left\langle \begin{bmatrix}
2 \\
4
\end{bmatrix}, \begin{bmatrix}
4 \\
2
\end{bmatrix}\right\rangle \ = \
\left\langle \begin{bmatrix}
2 \\
4
\end{bmatrix}, \begin{bmatrix}
0 \\
6
\end{bmatrix}\right\rangle
\ = \ L_{\Delta_2}.
\end{align*}
\end{example}

\begin{figure}[t]
	\ifpictures
	\centering
	\includegraphics[scale=0.45]{./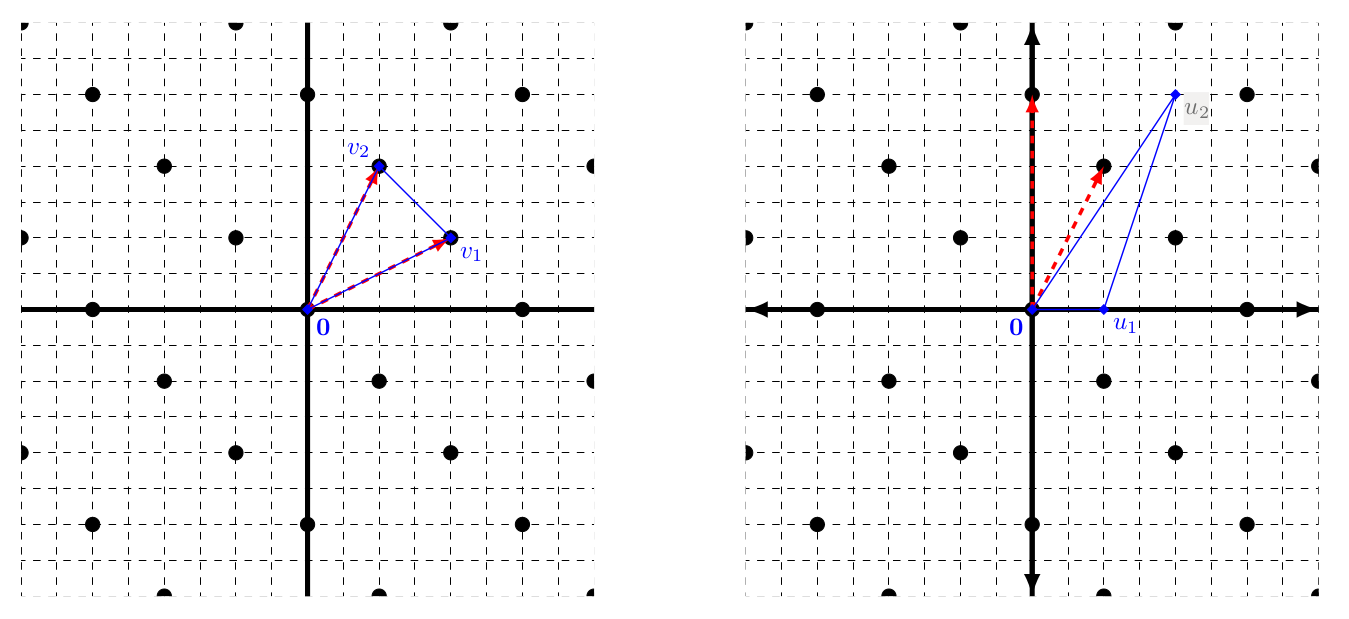}
	\fi
	\caption{\small Black points show the lattice generated by the rows of $M_{\Delta_1}$ (left) and $M_{\Delta_2}$ (right) where $\Delta_1$ and $\Delta_2$ are given as in \cref{Example:MMSInvariantofLattice}. The (blue) triangles visualize the 2-simplicial sets $\Delta_1$, $\Delta_2$ and the (red) dashed vectors visualizes the row span of $M_{\Delta_1}$ and $M_{\Delta_2}$.}
	\label{Figure:CorollaryEx1}
\end{figure}

Note that in the \cref{Example:MMSInvariantofLattice} we obtain identical lattices. We show that this is not always the case in the \cref{Example:MMSInvariantofLattice2}.

\begin{example}
\label{Example:MMSInvariantofLattice2}
Let 
\begin{align*}
\Vector{0} =\begin{bmatrix}
0 \\
0
\end{bmatrix},
\Vector{v_1} = \Vector{u_2} =\begin{bmatrix}
2 \\
2
\end{bmatrix},
\Vector{v_2} = \Vector{u_1} =\begin{bmatrix}
0 \\
6
\end{bmatrix}
\end{align*}
and $\Delta_1, \Delta_2 \subset (2\Z)^2$ be given as $\Delta_1  =  \left\{\Vector{0},\Vector{v_1} , \Vector{v_2} \right\}$  and $\Delta_2  =  \left\{\Vector{0},\Vector{u_1} , \Vector{u_2} \right\}$.
$\Delta_1$ and $\Delta_2$ have same maximal mediated sets since they correspond to the same simplex with different vertex order(see \cref{Figure:CorllaryEx2}).
Again we write the matrices, 
\begin{align*}
M_{\Delta_1} \ = \ \begin{bmatrix}
2 & 0  \\
2 & 6  
\end{bmatrix} \quad 
\text{ and }\quad 
M_{\Delta_2} \ = \ \begin{bmatrix}
0 & 2 \\
6 & 2 
\end{bmatrix}.
\end{align*}
If we denote the Hermite Normal Form of $M_{\Delta_1}$ and $M_{\Delta_2}$ as $H_1$ and $H_2$ respectively, then we have 
\begin{align*}
H_1 =  \begin{bmatrix}
2 & 0  \\
0 & 6  
\end{bmatrix} =\begin{bmatrix}
1 & 0  \\
-1 & 1 
\end{bmatrix}  
M_{\Delta_1}  
\end{align*}
and
\begin{align*}
H_2 =  \begin{bmatrix}
6 & 0  \\
0 & 2  
\end{bmatrix} =\begin{bmatrix}
-1 & 1  \\
1 & 0 
\end{bmatrix}  
M_{\Delta_2}.
\end{align*}
In this case Hermite Normal Forms are equal up to permutation of columns. 
Therefore, the lattice generated by the row spans of $M_{\Delta_1}$ and $M_{\Delta_2}$ are not identical, but they are isomorphic. This isomorphism is given by a permutation of the coordinates of the lattice.
\end{example}

\begin{figure}[t]
\ifpictures
\includegraphics[scale=0.45]{./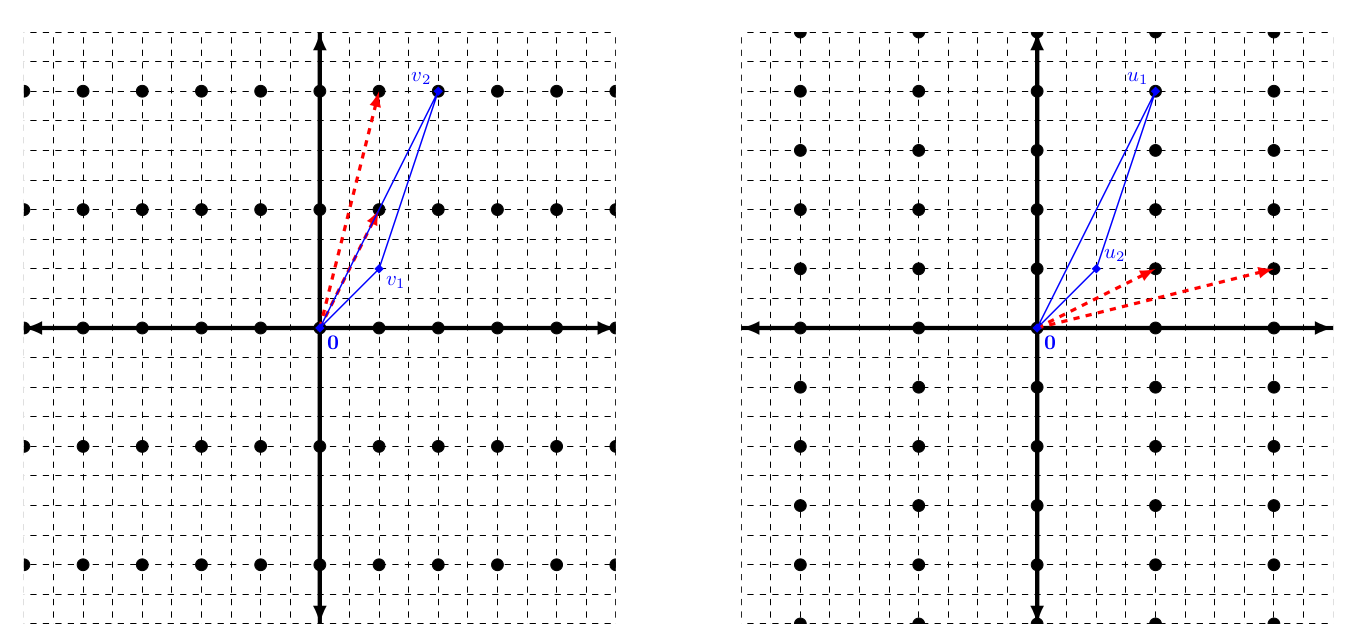}
\fi
\caption{\small Black points show the lattice generated by the rows of $M_{\Delta_1}$ (left) and $M_{\Delta_2}$ (right) where $\Delta_1$ and $\Delta_2$ are given as in \cref{Example:MMSInvariantofLattice2}. The (blue) triangles visualize the 2-simplicial sets $\Delta_1$, $\Delta_2$ and the (red) dashed vectors visualizes the row span of $M_{\Delta_1}$ and $M_{\Delta_2}$.}
\label{Figure:CorllaryEx2}
\end{figure}

\subsection{SONC with Simplex Newton Polytope}~
\label{Subsection:SONCWithSimplexNewtonPolytope}
We generalize \cite[Theorem 5.2]{Iliman:deWolff:Circuits} to sums of nonnegative circuit polynomials with simplex Newton polytope in this subsection.
The next proof heavily relies on the Gram matrix method; see e.g., \cite[Page 20]{Iliman:deWolff:Circuits} for an overview.
Let $\struc{\R[\Vector{x}]_d}$ denote the polynomials in $\R[\Vector{x}]$ with total degree at most $d$. 
For $f \in \R[\Vector{x}]_{d}$ and $\Vector{\alpha} \in \N^n$ , let $\struc{\coeff(f, \Vector{\alpha})}$ denote the coefficient of the term $\Vector{x}^{\Vector{\alpha}}$ in $f$.


\begin{theorem}
\label{Theorem:SONCwithSimplexNewt}
Let $\Delta = \{\Vector{0}, \Vector{\alpha(1)} , \dots , \Vector{\alpha(n)} \} \subset (2\Z)^n$ be a full dimensional simplicial set, $Y = \{ \Vector{\beta_1}, \dots, \Vector{\beta_m} \} \subseteq \Int \left(\conv(\Delta) \cap \Z^n \right)$ be a set of points.
Let $f = \lambda_0 + \sum_{i=0}^n a_i \Vector{x}^{\Vector{\alpha(i)}} + \sum_{\Vector{\beta} \in Y} b_{\Vector{\beta}} \Vector{x}^{\Vector{\beta}}$ be a SONC with support $\Delta \cup Y$, $a_0, \ldots, a_n > 0$,
such that for all $\Vector{\beta} \in Y$,  $b_{\Vector{\beta}} < 0$ or $\Vector{\beta} \notin (2\Z)^n$.
Then $f$ is a sum of squares if and only if every $\Vector{\beta} \in Y$ satisfies $\Vector{\beta} \in \Delta^*$.
\end{theorem}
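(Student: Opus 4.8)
The plan is to prove the two implications separately. Sufficiency follows by decomposing $f$ into circuit polynomials with vertex set $\Delta$ plus monomial squares and applying \cite[Theorem 5.2]{Iliman:deWolff:Circuits} termwise; necessity is the substantial part and is an extension of the Gram matrix argument of Reznick and of Iliman and the third author, with the sign hypothesis on $Y$ playing the decisive role. Throughout put $\Vector{\alpha(0)} = \Vector 0$, $a_0 = \lambda_0$, and $\Lambda := \tfrac12\conv(\Delta) \cap \Z^n$.

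\emph{Sufficiency ($\Leftarrow$).} Assume every $\Vector\beta \in Y$ lies in $\Delta^*$. Since $f$ is a SONC with $\newton f = \conv(\Delta)$, all vertex coefficients $a_i > 0$, and $b_{\Vector\beta} < 0$ or $\Vector\beta \notin (2\Z)^n$ for $\Vector\beta \in Y$, one may write
\[
 f \ = \ \sum_{\Vector\beta \in Y} p_{\Vector\beta} \ + \ \sum_{i=0}^n r_i\,\Vector x^{\Vector{\alpha(i)}},
\]
where each $p_{\Vector\beta} = \sum_{i=0}^n c_i^{(\Vector\beta)}\Vector x^{\Vector{\alpha(i)}} + b_{\Vector\beta}\Vector x^{\Vector\beta}$ is a nonnegative circuit polynomial with $c_i^{(\Vector\beta)} \geq 0$, $\sum_{\Vector\beta \in Y} c_i^{(\Vector\beta)} \leq a_i$, and $r_i = a_i - \sum_{\Vector\beta} c_i^{(\Vector\beta)} \geq 0$; this is the generic shape of a SONC with simplex Newton polytope and interior inner support (cf.\ \cite{Iliman:deWolff:Circuits}). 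The sign hypothesis forces $c_i^{(\Vector\beta)} > 0$ for every $i$: if some $c_i^{(\Vector\beta)}$ vanished, nonnegativity of $p_{\Vector\beta}$ together with $b_{\Vector\beta} < 0$ or $\Vector\beta \notin (2\Z)^n$ would place $\Vector\beta$ on a proper face of $\conv(\Delta)$, contradicting $\Vector\beta \in \Int(\conv(\Delta))$. Hence $\newton{p_{\Vector\beta}} = \conv(\Delta)$, so $\Delta(p_{\Vector\beta})^* = \Delta^* \ni \Vector\beta$, and \cite[Theorem 5.2]{Iliman:deWolff:Circuits} shows each $p_{\Vector\beta}$ is a sum of squares; each $r_i\,\Vector x^{\Vector{\alpha(i)}} = r_i\big(\Vector x^{\Vector{\alpha(i)}/2}\big)^2$ is a square since $\Vector{\alpha(i)} \in (2\Z)^n$. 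Summing, $f \in \Sigma_{n,2d}$.

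\emph{Necessity ($\Rightarrow$).} Suppose $f = \sum_{k=1}^N h_k^2$. Since $\newton{h_k} \subseteq \tfrac12\newton f = \tfrac12\conv(\Delta)$, write $h_k = \sum_{\Vector\mu \in \Lambda} c_{k,\Vector\mu}\Vector x^{\Vector\mu}$ and put $T := \{\Vector\mu \in \Lambda : c_{k,\Vector\mu} \neq 0 \text{ for some } k\}$ and $2T := \{2\Vector\mu : \Vector\mu \in T\} \subseteq \conv(\Delta) \cap (2\Z)^n$. The plan is to show that $L := 2T \cup Y$ is $\Delta$-mediated; by \cref{Definitiom:MMS} this gives $Y \subseteq L \subseteq \Delta^*$. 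First $\Delta \subseteq 2T$: as $\Vector{\alpha(i)}$ is a vertex of $\conv(\Delta)$, the only $\Vector\nu,\Vector\rho \in \Lambda$ with $\Vector\nu+\Vector\rho = \Vector{\alpha(i)}$ are $\Vector\nu = \Vector\rho = \Vector{\alpha(i)}/2$, so matching coefficients of $\Vector x^{\Vector{\alpha(i)}}$ gives $\sum_k c_{k,\Vector{\alpha(i)}/2}^2 = a_i > 0$, hence $\Vector{\alpha(i)}/2 \in T$. Now fix $\Vector p \in L \setminus \Delta$, and set $\Vector q = 2\Vector p$ if $\Vector p \in 2T$ and $\Vector q = \Vector p$ otherwise (then $\Vector p \in Y$ with $\Vector p$ odd or $\Vector p/2 \notin T$). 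Matching coefficients of $\Vector x^{\Vector q}$, the coefficient of $\Vector x^{\Vector q}$ in $f$ equals the diagonal term ($\sum_k c_{k,\Vector q/2}^2$ if $\Vector q$ is even, $0$ if $\Vector q$ is odd) plus the off-diagonal sum $\sum_{\Vector\nu \neq \Vector\rho,\ \Vector\nu+\Vector\rho = \Vector q}\sum_k c_{k,\Vector\nu}c_{k,\Vector\rho}$. In the first case the diagonal term is $\sum_k c_{k,\Vector p}^2 > 0$ while the coefficient of $\Vector x^{\Vector q}$ in $f$ is $\leq 0$ (it vanishes off $\Delta\cup Y$; it equals $b_{\Vector q} < 0$ when $\Vector q \in Y$, since $\Vector q = 2\Vector p$ is even; and it is not a vertex coefficient as $\Vector p \notin \Delta$), so the off-diagonal sum is strictly negative; in the second case the diagonal term is $0$ and the coefficient of $\Vector x^{\Vector q} = \Vector x^{\Vector p}$ is $b_{\Vector p} \neq 0$, so again the off-diagonal sum is nonzero. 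Either way there are distinct $\Vector\nu,\Vector\rho \in T$ with $\Vector\nu + \Vector\rho = \Vector q$, whence $\Vector p = \tfrac12(2\Vector\nu + 2\Vector\rho)$ is the midpoint of the distinct even points $2\Vector\nu, 2\Vector\rho \in 2T \subseteq L$. Thus $L \subseteq \MidD(L) \cup \Delta$, so $L$ is $\Delta$-mediated by \cref{Definition:MediatedSet} and $Y \subseteq \Delta^*$.

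\emph{Main obstacle.} The crucial, genuinely hypothesis-dependent step is the sign bookkeeping in the necessity direction: requiring $b_{\Vector\beta} < 0$ for even $\Vector\beta \in Y$ is exactly what makes the coefficient of $\Vector x^{2\Vector p}$ nonpositive for every $\Vector p \in T$ with $2\Vector p \notin \Delta$, which is what forces the mediation (off-diagonal) term to be nonzero and hence $2T$ to be $\Delta$-mediated; if an even interior point carried a nonnegative coefficient the argument would collapse, and indeed $f$ could then be a sum of monomial squares without that point lying in $\Delta^*$. The one point on the sufficiency side that deserves care is the ``vertex-split'' form of the SONC decomposition above; I would either invoke it from the structure theory of SONC cones or derive it directly from the definition of the SONC cone over the support $\Delta \cup Y$.
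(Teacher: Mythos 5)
Your proof is correct and follows essentially the same route as the paper: sufficiency decomposes $f$ into nonnegative circuit polynomials with full vertex support plus monomial squares and applies \cite[Theorem 5.2]{Iliman:deWolff:Circuits} termwise, and necessity is exactly the Gram-matrix/mediated-set argument that the paper imports verbatim from the proof of that theorem, with the only cosmetic difference that you treat odd $\Vector{\beta}$ directly through the vanishing diagonal term rather than via the paper's sign flip $x_j \mapsto -x_j$, and you assemble a single mediated set $L = 2T \cup Y$ instead of one set $L_{\Vector{\beta}}$ per $\Vector{\beta}$. One notational slip to fix: for $\Vector{p} \in 2T$ you should set $\Vector{q} = \Vector{p}$ (so the diagonal term is $\sum_k c_{k,\Vector{p}/2}^2$ and the concluding identity $\Vector{\nu}+\Vector{\rho}=\Vector{q}=\Vector{p}$ makes sense), not $\Vector{q} = 2\Vector{p}$; the intended argument is clearly correct as the second case already shows.
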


This theorem generalizes \cite[Theorem 5.2]{Iliman:deWolff:Circuits}, which states the same result for the case $\# Y = 1$.

\begin{proof}
First, assume that $f$ admits a SONC decomposition $f = \sum_{\Vector{\beta} \in Y} s_{\Vector{\beta} }$
where $s_{\Vector{\beta}}$ is a nonnegative circuit polynomial with the support $\Delta \cup \{ \Vector{\beta} \}$.
Since $s_{\Vector{\beta}}$ is a nonnegative circuit polynomial satisfying $\Delta(s_{\Vector{\beta}})^* = \Delta(f)^* = \Delta^*$, and since we have $\Vector{\beta} \in \Delta^*$ by assumption, \cite[Theorem 5.2]{Iliman:deWolff:Circuits} implies that $s_{\Vector{\beta}} \in \Sigma_{n,2d}$.
Thus, it follows that $f\in \Sigma_{n,2d}$.
	
For the converse, assume that $f\in \Sigma_{n,2d}$. 
We claim that given a $\Vector{\beta} \in Y$, if 
\begin{align}
\label{Eqn:GoodTerm}
\Vector{\beta} \notin (2\Z)^n \text{ or } b_{\Vector{\beta}} \leq 0
\end{align}
then $\Vector{\beta}  \in \Delta^*$.
Since $f \in \Sigma_{n,2d}$, it has a SOS decomposition $f = \sum_{i=i}^{k} h_i^2$.
Define the set 
\begin{align*}
	\struc{M} \ = \ \left\{ \Vector{\gamma}\in \N^n_d \ : \ \text{there exists an } i \in [k] \text{ with } \coeff(h_i,\Vector{\gamma}) \neq 0  \right\}.
\end{align*}
For every $\Vector{\beta} \in Y$ we define the set 
\begin{align*}
	\struc{L_{\Vector{\beta}}} \ = \ 2M \cup \Delta \cup {\Vector{\beta}}.
\end{align*}
We can assume $b_{\Vector{\beta}} < 0$: if $\Vector{\beta} \in (2\Z)^n$, then $b_{\Vector{\beta}} < 0$ by \cref{Eqn:GoodTerm}. Assume that $b_{\Vector{\beta}} > 0$ and that there exists an odd entry $\beta_j$ of $\Vector{\beta}$ for some $j \in [n]$. After a transformation $\tau_j: x_j \mapsto -x_j$ we can consider $b_{\Vector{\beta}} < 0$, while $\tau_j$ leaves $\sum_{i = 1}^k h_i^2$ invariant; see also \cite[proof of Theorem 5.2]{Iliman:deWolff:Circuits} and e.g., \cite{Blekherman:Parrilo:Thomas:SDOptAndConvAlgGeo}.
With $b_{\Vector{\beta}} < 0$ we obtain that $L_{\Vector{\beta}}$ is $\Delta$-mediated following verbatim the first part of the proof of \cite[Theorem 5.2]{Iliman:deWolff:Circuits}.
This completes the proof since every $\Delta$-mediated set is contained in $\Delta^*$ by \cref{Theorem:Reznick:MMSExists}.

\end{proof}

Given $f = \lambda_0 + \sum_{i=0}^n a_i \Vector{x}^{\Vector{\alpha(i)}} + \sum_{\Vector{\beta} \in Y} b_{\beta} \Vector{x}^{\Vector{\beta}} \in \R[x_1, \dots, x_n]_{2d}$ such that  $a_i>0$, $b_{\Vector{\beta}}<0$ and $\newton{f}$ is a simplex, due to \cite[Theorem 5.5]{Iliman:deWolff:Circuits} we know that
\begin{align*}
f \in P_{n,2d} \iff f \text{ is SONC}.
\end{align*}

The following corollary concerns two relaxations in polynomial optimization.
First, we recall the following two quantities:
\begin{align*}
\struc{f_{\text{SOS}}} & \ := \ \max \{\lambda \ : \ f-\lambda \text{ is SOS} \} \\
\struc{f_{\text{SONC}}} & \ := \ \max \{\lambda \ : \ f-\lambda \text{ is SONC} \}.
\end{align*}
Both $ f_{\text{SOS}} $ and $f_{\text{SONC}}$ are lower bounds for $f^* := \min \{f(\Vector{x}) \ : \ \Vector{x} \in \R^n \}$. 
Next, we present a corollary of \cref{Theorem:SONCwithSimplexNewt} which generalizes the part (2) of \cite[Corollary 3.6]{Iliman:deWolff:LowerBoundsforSimplexNewtPoly}

\begin{corollary}
Let $f$ be as given in \cref{Theorem:SONCwithSimplexNewt}, then 
\begin{align*}
f_{\text{SOS}} \ = \ f^* \iff \text{for all} \ \Vector{\beta} \in Y , \ \Vector{\beta} \in \Delta^* \text{ or } \Vector{\beta} \in (2\Z)^n \text{ and } b_{\Vector{\beta}}>0 
\end{align*}
Furthermore, if there exists $\Vector{v} \in (\R^*)^n$ such that $b_{\Vector{\beta}}<0$ for all $\Vector{\beta} \in Y$ then
\begin{align*}
f_{\text{SOS}} \ = \ f_{\text{SONC}} = f^*
\end{align*} 
\end{corollary}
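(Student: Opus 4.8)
The plan is to pin everything down via the chain $f_{\text{SOS}}\le f_{\text{SONC}}\le f^*$ and to pass back and forth between nonnegativity, the SONC property and the SOS property using \cite[Theorem 5.5]{Iliman:deWolff:Circuits} together with \cref{Theorem:SONCwithSimplexNewt}. Note first that, since $f$ satisfies the hypotheses of \cref{Theorem:SONCwithSimplexNewt}, the alternative ``$\Vector{\beta}\in(2\Z)^n$ and $b_{\Vector{\beta}}>0$'' never occurs, so the right-hand side of the asserted equivalence simply reads: every $\Vector{\beta}\in Y$ lies in $\Delta^*$. Let $c_0>0$ be the constant term of $f$. Since $f$ is a SONC we have $f\in P_{n,2d}$, hence $f^*\ge 0$; and since every non-constant monomial in the support of $f$ vanishes at the origin, $f(\Vector{x})\to c_0$ as $\Vector{x}\to\Vector{0}$, so $f^*\le c_0$. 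Moreover, for every $\lambda<c_0$ the shifted polynomial $f-\lambda$ has exactly the same support $\Delta\cup Y$ and the same signs of coefficients as $f$, and hence again satisfies the structural hypotheses of \cref{Theorem:SONCwithSimplexNewt}.

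For the direction ``$\Leftarrow$'' I would fix $\lambda<f^*$, so that $\lambda<c_0$ and $f-\lambda\in P_{n,2d}$. Applying a coordinate sign change to the inner terms of odd support, exactly as in the proof of \cref{Theorem:SONCwithSimplexNewt}, \cite[Theorem 5.5]{Iliman:deWolff:Circuits} then shows that $f-\lambda$ is a SONC; since every $\Vector{\beta}\in Y$ lies in $\Delta^*$ by hypothesis, \cref{Theorem:SONCwithSimplexNewt} gives that $f-\lambda$ is a sum of squares. As this holds for all $\lambda<f^*$, we obtain $f_{\text{SOS}}\ge f^*$, and together with $f_{\text{SOS}}\le f^*$ this is $f_{\text{SOS}}=f^*$. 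For ``$\Rightarrow$'' I would use the contrapositive: if some $\Vector{\beta}\in Y$ does not lie in $\Delta^*$, then $f$ is not a sum of squares by \cref{Theorem:SONCwithSimplexNewt}. Because $f-\lambda$ being a sum of squares for some $\lambda\ge 0$ would force $f=(f-\lambda)+\lambda$ to be a sum of squares, we get $f_{\text{SOS}}\le 0\le f^*$; and $f_{\text{SOS}}=f^*$ would force $f_{\text{SOS}}=0$, i.e.\ $f=f-0$ a sum of squares (the SOS cone being closed), a contradiction. Hence $f_{\text{SOS}}\neq f^*$.

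For the ``furthermore'' part, the extra hypothesis provides a sign vector turning every inner coefficient negative; after the corresponding coordinate sign change, which fixes $f^*$ and preserves both the SOS and the SONC cones since the vertices of $\Delta$ are even, we may assume $b_{\Vector{\beta}}<0$ for all $\Vector{\beta}\in Y$. Then for each $\lambda<f^*$ the polynomial $f-\lambda$ has positive constant term, positive vertex coefficients, negative inner coefficients and simplex Newton polytope, so \cite[Theorem 5.5]{Iliman:deWolff:Circuits} applies directly and $f-\lambda$ is a SONC; thus $f_{\text{SONC}}\ge f^*$ and hence $f_{\text{SONC}}=f^*$. Combining this with the equivalence established above, as soon as every $\Vector{\beta}\in Y$ lies in $\Delta^*$ we conclude $f_{\text{SOS}}=f_{\text{SONC}}=f^*$.

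I expect the main obstacle to be the step asserting that $f-\lambda$ is a SONC in the first direction: \cite[Theorem 5.5]{Iliman:deWolff:Circuits} is stated for polynomials all of whose non-vertex coefficients are negative, so without the extra hypothesis one must first absorb the inner terms carrying a positive coefficient -- which necessarily have odd support -- by coordinate sign changes, and one has to verify this can be done consistently. One must also keep careful track of the asymmetry that $f^*$ is an infimum which need not be attained, whereas $f_{\text{SOS}}$ and $f_{\text{SONC}}$ are attained because the respective cones are closed; this is why the argument runs over all $\lambda<f^*$ rather than over $\lambda=f^*$. A minor additional point is that in the SONC decomposition of $f-\lambda$ each $\Vector{\beta}\in Y$ is the distinguished interior term of a nonnegative circuit polynomial whose vertices span a (possibly proper) face of $\conv(\Delta)$, so one applies \cref{Theorem:SONCwithSimplexNewt} (or \cite[Theorem 5.2]{Iliman:deWolff:Circuits}) on that face and uses that the maximal mediated set of a face of $\Delta$ is contained in $\Delta^*$.
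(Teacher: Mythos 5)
Your overall route is the same as the paper's: everything is reduced to applying \cref{Theorem:SONCwithSimplexNewt} to a constant shift of $f$, and your observation that the disjunct ``$\Vector{\beta}\in(2\Z)^n$ and $b_{\Vector{\beta}}>0$'' is vacuous under the theorem's hypotheses is correct. The paper's proof is shorter: it works directly with $f-f^*$ (using that $f_{\text{SOS}}$ is attained, so $f_{\text{SOS}}=f^*$ iff $f-f^*\in\Sigma_{n,2d}$), asserts that $f-f^*$ has the same support as $f$ and ``is SONC because $f$ is SONC'', applies the theorem, and for the ``furthermore'' part cites \cite[Corollary 3.6]{Iliman:deWolff:LowerBoundsforSimplexNewtPoly} rather than rerunning Theorem 5.5. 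Your variant with $\lambda<f^*$ plus closedness of the SOS cone is a legitimate (arguably more careful) substitute for the attainment argument, and your contrapositive for ``$\Rightarrow$'' is fine. One cosmetic error: the inequality $f_{\text{SOS}}\le f_{\text{SONC}}$ with which you open is false in general, since neither cone contains the other; you never actually use it (only $f_{\text{SOS}}\le f^*$ and $f_{\text{SONC}}\le f^*$), so delete it.

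The obstacle you flag --- why is $f-\lambda$ a SONC? --- is the one genuinely delicate point, and your proposed fix does not close it. If several inner terms with odd exponents carry coefficients of different signs, a single substitution $x_j\mapsto -x_j$ can turn a negative $b_{\Vector{\beta}}$ positive while repairing another, so a sign vector making \emph{all} inner coefficients simultaneously negative need not exist; indeed its existence is exactly the extra hypothesis of the ``furthermore'' statement, so it cannot be assumed in the first part. Be aware that the paper's own step here is also terse: distributing the constant $f^*$ over the circuit summands of a SONC decomposition of $f$ need not keep each summand nonnegative, so ``$f-f^*$ is SONC because $f$ is SONC'' requires justification too. The clean repair is to invoke \cite[Theorem 5.5]{Iliman:deWolff:Circuits} in the form whose hypothesis matches that of \cref{Theorem:SONCwithSimplexNewt} (each non-vertex term satisfies $b_{\Vector{\beta}}<0$ or $\Vector{\beta}\notin(2\Z)^n$), which yields the SONC property of the nonnegative polynomial $f-\lambda$ with no sign changes at all; the restricted form quoted just before the corollary (all $b_{\Vector{\beta}}<0$) is not sufficient for your argument. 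Your final worry about proper faces is unnecessary, since $Y$ consists of interior points of $\conv(\Delta)$.
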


\begin{proof}
Note that $f_{\text{SOS}} = f^*$ if and only if $f-f^* \in \Sigma_{n,2d}$.
Since $f^* \in \R$, subtracting it from $f$ does not effect the support of $f$.
In addition, $f-f^*$ is SONC because $f$ is SONC.
Therefore \cref{Theorem:SONCwithSimplexNewt} implies that $f-f^* \in \Sigma_{n,2d}$ if and only if $\Vector{\beta} \in Y , \ \Vector{\beta} \in \Delta^*$ or $\Vector{\beta} \in (2\Z)^n \text{ and } b_{\Vector{\beta}}>0$.
Furthermore, if there exists $\Vector{v} \in (\R^*)^n$ such that $b_{\Vector{\beta}}<0$ for all $\Vector{\beta} \in Y$, then $f_{\rm SONC} = f^*$ due to \cite[Corollary 3.6]{Iliman:deWolff:LowerBoundsforSimplexNewtPoly}.
\end{proof}

\section{Implementation}
\label{Section:Implementation}

A major goal of this article is to generate a database by classifying all maximal mediated sets of $n$-simplicial sets with maximal degree $2d$ for $n$ and $2d$ as large as possible. 
Here we describe the underlying implementation for this computation.
The implementation was done in \texttt{C++} using the \textsc{Polymake} \cite{Ewgeniy:Joswig:polymake:2000} software package; its source code can be obtained via

\begin{center}
	\href{https://polymake.org/doku.php/extensions/max\_mediated\_sets}{https://polymake.org/doku.php/extensions/max\_mediated\_sets}
\end{center}

There are three major parts to the process:
\begin{enumerate}
	\item Enumerating simplices,
	\item classifying simplices, and
	\item computing the maximal mediated set.
\end{enumerate}

\subsection{Enumerating Simplices}
Assume that $n$ and $2d$ are fixed. Following \cref{Corollary:MMSisInvariantofLattice}, we restrict to the simplices containing a vertex at the origin.
Let
\begin{align*}
\struc{V} \ = \ \left[(x_1, \dots ,x_n) \in (2\N)^n \ : \ \sum_{i=1}^{n} x_i \leq 2d \right] - \Vector{0}
\end{align*}
denote the lexicographically ordered list of all even lattice point (excluding $\Vector{0}$) with 1-norm at most $2d$.
In what follows, we represent $V$ as a $ (\binom{n+2d-1}{2d}-1) \times n$ matrix where the $j$-th row contains the $j$-th entry in the list.
Generating all $n$-simplices containing the origin thus is equivalent to listing all full-rank $n \times n$ submatrices of $V$. In order to reduce the number of necessary rank computations, we construct submatrices row by row, adding rows in the order of $V$.

We introduce some additional notation: Given a \struc{$k$-\textit{index set}} $J \subset [\#V]$, $\#J = k$, we denote its entries by $\struc{J_1},\ldots,\struc{J_k}$ and we denote by $\struc{V_J}$ the submatrix of $V$ formed by the row indices $J$. 
We call a $k$-index set $J$ a \struc{\textit{prefix}} of an $n$-index set $M$ if $k \leq n$ and $J_i = M_i$ for $i \in [k]$.

If we have an $n$-index set $I$, then the following algorithm computes the lexicographically next $n$-index set $J$ such that $\rank(V_{J})=n$.

\begin{algorithm}~

		\noindent \INPUT{$V$: Matrix of valid simplex vertices, $I$: $n$-index set}\\
		\OUTPUT{$J$: the lex-next $n$-index set with $\rank(V_J)=n$, if such a set exists; $\emptyset$: otherwise}
		\begin{algorithmic}[1]
			\State $J\gets \textrm{ lexicographic successor of }I$
			\State $K\gets \emptyset$
			\For{$j\in [n]$}
			\State $K\gets K\cup \{J_j\}$
			\If{$\textrm{rank}(V_K)< \# K$}
			\If{there exists a $\# K$-index set $\hat K$ on $[\# V - \# K]$ with $\hat K >_{\text{lex}} K$}
			\State $J\gets \min_{lex} \left\{\hat K \cup M \textrm{ $n$-index set} : \hat K>_{\text{lex}}K,\max(\hat K)<\min(M)\right\}$
			\State $K \gets \text{ largest prefix of } J \text{ contained in } K$
			\State $j \gets \# K + 1$
			\Else
			\State \Return $\emptyset$
			\EndIf
			\EndIf
			\EndFor
			\State \Return $J$
		\end{algorithmic}
		\label{algorithm:enum}
\end{algorithm}

\begin{proof}
Correctness of the algorithm is clear by construction.
In line 7, $J$ is lexicographically increased, and $K$ is always a prefix of $J$. 
The condition in line 6 does not hold for any prefix of the lexicographically maximal $n$-index set on $[\# V]$, hence the algorithm terminates.
\end{proof}

Let $K$ be an index set and $l \in [\# V]$ such that $l \notin K$. 
If $\rank(V_K) = \rank(V_{K \cup \{l\}})$, then \cref{algorithm:enum} excludes all instances containing $K \cup \{l\}$.
Therefore, we avoid the rank checks for all further matrices containing the $V_{K \cup \{l\}}$.

For the enumeration process, we use \cref{algorithm:enum} in parallel by assigning to one thread the enumeration of all matrices that have a distinguished $p \in V$ as their first row. 
The threads can then be run as independent processes as no inter-thread communication is required.
We did so using the \texttt{GNU parallel} software \cite{Tange:GNUParallel}.

\subsection{Classifying Simplices}
Two different simplicial sets $\Delta_1, \Delta_2 \subseteq (2\Z)^n$ may have the same maximal mediated set structure, i.e., there exists a $T \in \MMSPreservingGroup$ such that $T(\Delta_1)=\Delta_2$.
\cref{Corollary:MMSisInvariantofLattice} implies that $h$-ratio is an invariant of an underlying lattice rather than of the simplicial set itself.
Therefore, instead of the distribution of $h$-ratios over $n$-simplicial sets with maximal degree $2d$, we consider the distribution of $h$-ratios over possible lattices that can arise from $n$-simplicial sets with maximal degree $2d$.
For any $n$-simplicial set $\Delta$, we want to find a unique representative from the set $\left\{ T(\Delta) : T \in \MMSPreservingGroup \right\}$.
Unfortunately, computing the Hermite normal form straightforwardly is not sufficient to find a unique representation, because as shown in \cref{Example:MMSInvariantofLattice2}, reordering the vertices yields different Hermite normal forms.
Hence, we consider all orderings, compute their Hermite normal forms, and check whether we have already encountered that lattice before.

\begin{remark}
\label{Remark:HNFReduction}
Let $S$ be a finite set of $n$-simplicial sets. The \struc{\textit{Hermite normal form reduction (HNF reduction)}} of $S$ is the process of identifying every $n$-simplicial set in $S$ if they reduce to the same (row) Hermite normal form up to permutation of columns. 
\end{remark}

The aforementioned check is only feasible if a fast lookup of previously encountered lattices is available. 
As for the instances of interest the set of these is larger than the memory available to us, we had to resort to an on-disk key-value store.
We also required support for deadlock-free lookups and writes from multiple threads so we could still benefit from the parallelizability of the enumeration.
We ended up using a \texttt{BerkeleyDB} \cite{Olson:Bostic:Seltzer:BerkeleyDB} database, storing Hermite normal form as key and their maximal mediated set, companion matrices, $h$-ratio and other interesting information as value.

\subsection{Computing MMS}
We compute the maximal mediated set using a slightly different approach than \cref{algorithm:MMS1}.
Note that to compute $\Delta^*$, it is enough to compute $\Delta^*\cap(2\Z)^n$ since each $\Vector{\alpha} \in \Delta^* \setminus (2\Z)^n$ is midpoint of two distinct points in $\Delta^* \cap (2\Z)^n$.
In the following algorithm, we compute $\Delta^* \cap (2\Z)^n$ by starting with an lex-ordered list $L$ of all points in $\conv(\Delta) \cap (2\Z)^n$ and iteratively removing all points that are not midpoints of two points in $\conv(\Delta) \cap (2\Z)^n$. 
The notions $\struc{\ListHead(L)}$ and  $\struc{\ListTail(L)}$ denote the first and the last element of the lex-ordered list $L$.
\begin{algorithm}~

	\noindent \INPUT{$\Delta$: finite set of affine independent points in $(2\Z)^n$}\\ 
	\OUTPUT{$\Delta^*$: the $\Delta$-mediated subset of $\Z^n$ that contains every $\Delta$-mediated set}
	\begin{algorithmic}[1]
		\State $L\gets[(\conv(\Delta)\cap (2\Z)^n]$
		\State $i\gets \ListTail(L)$
		\While{$i\neq \ListHead(L)$}
		\If{$i\notin\MidD(L)\cup\Delta$}
		\State $L \gets L \setminus \{i\}$
		\State $i \gets \ListTail(L)$
		\Else
		\State decrement $i$
		\EndIf
		\EndWhile\\
		\Return $L\cup \MidD(L)$
	\end{algorithmic}
	\label{algorithm:MMS2}
\end{algorithm}


\begin{proof}[Correctness and Termination]
First we prove that the algorithm terminates, i.e., the while loop in the algorithm terminates.
If $L$ contains only one element, then the while loop immediately terminates since the condition in line 3 is satisfied.
Assume $L$ contains more than one element.
\begin{enumerate}
	\item If the condition in line 4 is not satisfied for any $i$ as $i$ runs through $L$, then the while loop terminates.
	\item If the condition in line 4 is satisfied for some $i$ as $i$ runs through $L$, then $i$ is removed from $L$ and while loop restarts.
\end{enumerate}
Since $L$ has a finite cardinality $k$, the while loop terminates after at most $k-1$ restarts.

For the correctness of the algorithm, let $\Delta_0^*$ denote the output of \cref{algorithm:MMS2} with the input $\Delta$. 
We show that $\Delta^* \subset \Delta_0^*$ and $\Delta_0^*$ is $\Delta$-mediated, thus it follows that $\Delta^* = \Delta_0^*$ by maximality of $\Delta^*$.

For the first one it is enough to show the claim for even points, i.e. $\Delta^* \cap (2\Z)^n \subset \Delta_0^* \cap (2\Z)^n $.
In order to argue by contradiction, we assume that $ D = (\Delta^* \cap (2\Z)^n) \setminus  (\Delta_0^* \cap (2\Z)^n)$ is non empty.
When algorithm is initialized, the list $L$ is set to $\conv(\Delta) \cap (2\Z)^n$ which contains $D$.
As the algorithm runs through, some elements of $D$ are discarded one by one. 
Let $\Vector{\alpha}$ denote the first element of $D$ that was discarded from $L$. 
Note that $D \cap \Delta = \emptyset$ because $\Delta$ is a subset of both $\Delta^*$ and $\Delta_0^*$.
Let $L_{\Vector{\alpha}}$ denote the elements that stay in the list $L$ until $\Vector{\alpha}$ is discarded. 
Since $\Vector{\alpha} \in \Delta^* \setminus \Delta $, there exists distinct $\Vector{\alpha_1}, \Vector{\alpha_2} \in \Delta^*$ such that $\Vector{\alpha} = \frac{\Vector{\alpha_1} + \Vector{\alpha_2}}{2}$. 
Note that by definition of $\Vector{\alpha}$, both $\Vector{\alpha_1}$ and $\Vector{\alpha_2}$ are in $L_{\Vector{\alpha}}$.
Because, otherwise $\Vector{\alpha_1}$ or $\Vector{\alpha_2}$ would be the first element in $D$ to be removed from $L$.
Therefore, we have that $\Vector{\alpha} \in \MidD(L_\alpha)$ and condition on step 4 fails to hold.
However, this implies that $\Vector{\alpha}$ is not discarded which contradicts to the fact that $\Vector{\alpha} \in D$.
Therefore, $D$ is empty and $(\Delta^* \cap (2\Z)^n) \subset  (\Delta_0^* \cap (2\Z)^n)$.

Lastly in order to prove $\Delta_0^*$ is $\Delta$-mediated, we let $\Vector{\beta} \in \Delta_0^* \setminus \Delta$ and show that $\Vector{\beta} \in \MidD(\Delta_0^*)$. 
Due to step 4, $\Vector{\beta} \in \Delta_0^*$ only if $\Vector{\beta} \in \MidD(\Delta^0 \cap (2\Z)^n)$.
The claim follows since $\MidD(\Delta_0^*) = \MidD(\Delta_0^* \cap (2\Z)^n)$.
\end{proof}

There are a two additional aspects of our implementation algorithm that we did not highlight in the pseudo code.
\begin{enumerate}
	\item To increase efficiency, we incorporated cost efficient pre-computation checks based on \cite[Theorem 2.5, Theorem 2.7]{Reznick:AGI}.
They let us detect $H$-simplices and $M$-simplices without going through the iteration process in some cases.
	\item We keep $L$ in lexicographical order, which ensures that for any point $i\in L$, if it is midpoint of two points in $L$ then one of those two will appear before $i$ in the list and  will appear after $i$.
This enables us to find out whether $i\in \MidD(L)$ in at most $\frac{1}{4}(\#L)^2$ operations.
\end{enumerate}

\section{Computational Results and Statistics}
In this section, we analyze the experimental results we achieved. 
Our main measurement is the $h$-ratio defined in \cref{Definition:hratio}, and we are interested how the $h$-ratio is distributed for fixed dimension and degree.
\subsection{Experimental Setup}
\label{Subsection:ExperimentalSetup}
Here we give the overview of the experimental setup we have used.
\begin{description}
	
	\item[Software] We performed the maximal mediated set computations in the open source software \textsc{Polymake}.
	We have written our own extension to \textsc{Polymake} that computes the maximal mediated set via \cref{algorithm:MMS2}.
	The package is available at:
	\begin{center}
		\href{https://polymake.org/doku.php/extensions/max\_mediated\_sets}{https://polymake.org/doku.php/extensions/max\_mediated\_sets}
	\end{center}
	
	\item[Investigated Data] The investigated data consists of simplicial sets and the lattices underlying simplicial sets described in \cref{Corollary:MMSisInvariantofLattice}.
	We divide the investigated data into smaller sets according to two parameters:
	\begin{description}
		\item[n] the dimension of the simplicial sets. 
		\item[2d] the maximal total degree of the simplicial sets, i.e. the minimum integer for a given simplicial set $\Delta$ such that for all $\Vector{p} \in \conv(\Delta)$, $\Vector{1}^T \cdot \Vector{p} < 2d$.
	\end{description}
	\begin{table}[h]
		\centering
		\begin{tabular}{c|cccccc}
			dimension & 2 & 3 & 4 & 5 & 7 &  \\
			\hline
			degree    & 150 & 16 & 14 & 8 & 4 & 
		\end{tabular}
		\caption{\small Maximal degrees and dimensions of the fully computed data.}
		\label{table:datainfo_full}
	\end{table}
	\cref{table:datainfo_full} summarizes for which $n,2d \in \N$ we have computed MMS of every possible instance.
	We resort to sampling instead of computing the maximal mediated set for all simplicial sets for larger $n$ and $2d$.
	\cref{table:datainfo_sampled} summarizes the cases for which values of $n,2d \in \N$ we have sampled $n$-simplicial sets $\Delta \subset (2\Z)^n$ such that zero vector is a vertex of $\conv(\Delta)$. The database is available via
	\begin{center}
		\href{https://polymake.org/downloads/MMS/}{https://polymake.org/downloads/MMS/}
	\end{center} 
	\begin{table}[t]
		\centering
		\begin{tabular}{c|cccccc}
			dimension  & 4 & 5 & 6 & 7 & 8 & 9  \\
			\hline
			degree     & 16  & 16 & 16 & 16 & 16 & 16
		\end{tabular}
		\caption{\small Maximal degrees and dimensions of the sampled data.}
		\label{table:datainfo_sampled}
	\end{table}
	
	\item[Sampling] For reproducibility of our sampling, we provide the \texttt{RandomSimplexIterator} class in our \textsc{Polymake} package, using the type \texttt{UniformlyRandom<Integer>} in \textsc{Polymake}. This class initially produces an array of integers from a seeded uniform distribution with the seed 1. 
	Then it picks $n$ points in $\N^n$ with 1-norm at most $2d$ uniformly at random seeded by the elements the integer array.
	If these $n$ points together with the origin are affine independent then we keep this sample, otherwise we discard it and pick another $n$ point uniformly at random.
	We did not set a specific sample size as stopping criterion, since we aim to compute as much MMS as we can.
	Thus, we stopped the sampling process according to elapsed time for each $n$ and $2d$, see the point Sampled Data Sets in \Cref{Subsection:EvaluationOfTheExperiment}	
	
	\item[Hardware and System] We used three separate computers for the computations. 
	For $n = 4$ and $2d = 14$, we used a \verb+AMD Phenom(TM) II X6 1090T+ with 5 cores, 16 GB of RAM under openSUSE Leap 15.0.
	For $n = 5$ and $2d = 8$, we used a \verb+AMD Phenom(TM) II X6 1090T+ with 6 cores, 16 GB of RAM under openSUSE Leap 42.3.
	For the remaining computations, we used \verb+Intel(R) Core(TM) i7-8700+ \verb+CPU @ 3.20GHz+ with 12 cores, 16 GB of RAM under openSUSE Leap 15.0.
	

\end{description}

\subsection{Evaluation of the Experiment}
\label{Subsection:EvaluationOfTheExperiment}
In this section we present and evaluate the results of our computation.

\begin{description}
	\item[Maximal Mediated Subsets of 2-Simplicial Sets] First we address \cref{Conjecture:ReznickDim2}, which was announced in \cite[Page 9]{Reznick:AGI}. 
	We computed the maximal mediated sets of all 4266834 2-simplicial sets with maximal degree 150, and confirmed that \cref{Conjecture:ReznickDim2} holds.
	These 2-simplicial sets arise from 886297 different lattices as described in \cref{Corollary:MMSisInvariantofLattice} after an Hermite normal form reduction.
	We summarize the statistics corresponding to $n = 2$, $2d = 150$ case in \cref{table:dim2deg150}.
	\begin{table}[h]
		\centering
		\begin{tabular}{c|ccccc}			
			& Total count & $H$-simplex & $M$-simplex  & mean of $h$-ratio & SD of $h$-ratio  \\
			\hline
			2-Simplicial Sets & 4266834 & 4250533  & 16301 & 0.996179 & 0.061691 \\
			Lattices & 886297 & 886188 & 109 & 0.999877 & 0.011089\\
		\end{tabular}
		\caption{\small From left to right, the total number of simplices, the number of $H$-simplices, the number of $M$-simplices, the average $h$-ratio, and the standard deviation of the $h$-ratio for 2-simplicial sets and their underlying lattices in the case $n = 2$, $2d = 150$.}
		\label{table:dim2deg150}
	\end{table}
	From the \cref{table:dim2deg150}, we see that number of $H$-simplices is significantly higher than number of $M$-simplices; a fact suggested by \cite[Theorem 5.9]{Iliman:deWolff:Circuits}.
	Thus, we provide experimental evidence that a clear majority of all non-negative circuit polynomials in $\R[x,y]$ are sums of squares.	
	\item[Fully Computed Data Sets for Higher Dimensions] 
	Here we present the fully computed data and show that in these cases the Hermite normal form reduction indeed yields a different distribution for $h$-ratio. 
	First, we summarize the general statistics we obtained from the fully computed data sets in \cref{table:fullycomputedstats}. 
	\begin{table}[h]
		\centering
		\begin{tabular}{cc|c|ccc}
			
			n                  & 2d					 &                & Total Count & Mean of $h$-ratio & SD of $h$-ratio \\
			\hline
			\multirow{2}{*}{3} & \multirow{2}{*}{10} & Simplicial Set & 21636       & 0.724138        & 0.392967 \\ 
			 
			&                        				& Lattice        & 782          & 0.592994 & 0.397988 \\

			\multirow{2}{*}{3} & \multirow{2}{*}{16} & Simplicial Set & 659082        & 0.638828        & 0.412316 \\ 
			
			&                        				& Lattice        & 20429          & 0.583357 & 0.412889 \\ 
			
			\multirow{2}{*}{4} & \multirow{2}{*}{14} & Simplicial Set & 853024289       & 0.433506        & 0.383378 \\ 
			
			&                        				& Lattice        & 1602368          & 0.227706 	 & 0.273419 \\ 
			
			\multirow{2}{*}{5} & \multirow{2}{*}{8} & Simplicial Set & 305565979       & 0.680445        & 0.373089 \\ 
			
			&                        				& Lattice        & 53306          & 0.470493 	 &  0.303315 \\ 
			
			\multirow{2}{*}{7} & \multirow{2}{*}{4} & Simplicial Set & 2414505       & 0.931788        & 0.238172 \\ 
			
			&                        				& Lattice        & 19          & 0.853923	 & 0.304942 \\ 
			
		\end{tabular}
		\caption{\small This table summarizes the statistics of $n$-simplicial sets with maximal degree $2d$ and the statistics of lattices underlying $n$-simplicial sets with maximal degree $2d$ for fully computed data sets.}
		\label{table:fullycomputedstats}
	\end{table}
	
	We observe that the number of underlying lattices is significantly smaller than the number of simplicial sets. 
	To point out this difference more rigorously, we provide in \cref{table:fullycomputeddecreasefactor} the factors of decrease for each $n$ and $2d$ in the fully computed cases.
	
	\begin{table}[t]
		\centering
		\begin{tabular}{cc|c}
			n                  & 2d					 & Decrease Factor \\
			\hline
			3 & 10 & 27.667519    \\ 
			3 & 16 & 32.262078 \\
			4 & 14 & 532.352299 \\
			5 & 8  & 5732.299910 \\
			7 & 4 & 127079.210526 \\
			
		\end{tabular}
		\caption{\small The factors of decrease in the number of stored maximal mediated sets after a Hermite normal form reduction is performed.}
		\label{table:fullycomputeddecreasefactor}
	\end{table}	

	\cref{table:fullycomputeddecreasefactor} reveals that the decrease factor is more sensitive to a change in $n$ than a change in $2d$.
	We explain this experimental observation theoretically in what follows. Let $\Delta$ be a $n$-simplicial set and consider the coordinate permutation given by $\Pi_{\sigma}: x_i \mapsto x_{\sigma(i)}$ where $\sigma \in S_n$. Observe that:
	\begin{enumerate}
		\item Since $\Pi_{\sigma} \in \MMSPreservingGroup(\R^n)$, $\Delta$ and $\Pi_{\sigma}(\Delta)$ share the same Hermite normal form after a suitable permutation of columns. Therefore, $\Delta$ and $\Pi_{\sigma}(\Delta)$ are in the same class of lattices in our database. 
		\item Maximal degrees of $\Delta$ and $\Pi_{\sigma}(\Delta)$ are equal.
		\item Increasing the dimension from $n$ to $n+1$ yields $(n+1)! - n!$ new possible coordinate permutations. 
	\end{enumerate}
	Therefore, increasing $n$ increases the number of simplicial sets with the same Hermite normal form exponentially even if maximal degree $2d$ is fixed.
	However, increasing $2d$ while $n$ is fixed does not create new symmetries and hence does not affect the decrease factor as severely.
	The decrease in the total count of objects in \cref{table:fullycomputedstats} is crucial to reduce the size of the database.
	However, the HNF reduction is computationally costly since making use of \cref{Corollary:MMSisInvariantofLattice} requires to consider all column permutations of the underlying matrix.
	Thus, there is a trade off between the memory, which is needed for the database, and the time needed to compute the database.
	
	Recall that MMS is an invariant of the underlying lattice of its defining simplex by \cref{Corollary:MMSisInvariantofLattice}. Therefore, considering lattices (instead of the original simplicial sets) yields a more accurate description of the behavior of $h$-ratio.
	We plot the distributions of simplicial sets and lattices in \cref{Figure:FullComputedSummary} to visualize the significant difference between the distributions.
	\begin{figure}[t]
		\centering
		\ifpictures
		\includegraphics[width=0.9\linewidth]{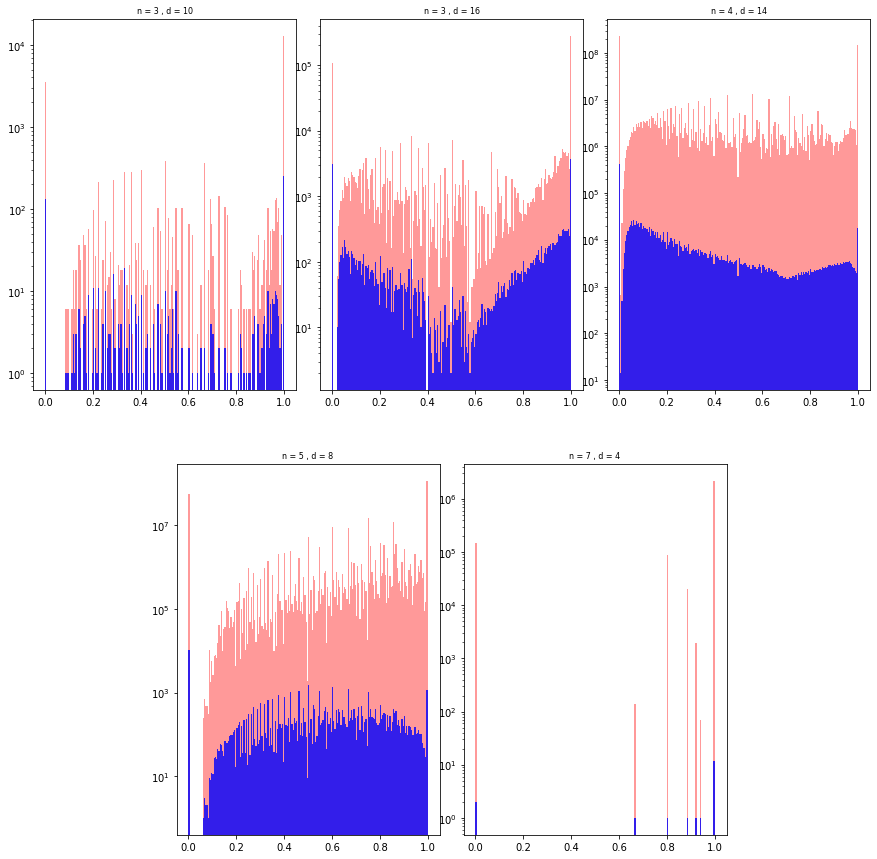}
		\fi
		\caption{\small The distribution of $h$-ratio over the $n$-simplicial sets (red), and over lattices (blue) for the given $n$ and $2d$ in \cref{table:datainfo_full}.}
		\label{Figure:FullComputedSummary}
	\end{figure}	
	The difference of the distributions follows moreover from our results in \cref{table:fullycomputedstats} and \cref{table:fullycomputeddecreasefactor}.
	More specifically, we observe that Hermite normal form reduction decreases the expected $h$-ratio.
	The standard deviation for simplicial sets is more stable compared to standard deviation for lattices.

	\item[Sampled Data Sets] For higher $n$ and $2d$ we produced a sampled database as described in the sampling part of \cref{Subsection:ExperimentalSetup}. In \cref{table:sampledstats} we present the summary of $h$-ratio statistics of the sampled cases for simplicial sets and underlying lattices and in \cref{Figure:SampledSummary} we plot the distributions of simplicial sets and underlying lattices.
	
		\begin{table}[t]
		\centering
		\begin{tabular}{cc|c|ccc}
			
			n                  & 2d					 &                & Total Count & Mean of $h$-ratio & SD of $h$-ratio\\
			\hline
			\multirow{2}{*}{4} & \multirow{2}{*}{16} & Simplicial Set & 10000000       & 0.392896        & 0.370466 \\ 
			
			&                        				& Lattice        & 2067884       & 0.221803        & 0.277060 \\

			\multirow{2}{*}{5} & \multirow{2}{*}{16} & Simplicial Set & 5000000        & 0.299490        & 0.320094 \\ 
			
			&                        				& Lattice        & 3297468          & 0.216518 & 0.245109 \\ 
			
			\multirow{2}{*}{6} & \multirow{2}{*}{16} & Simplicial Set & 1000000       & 0.290170        & 0.322581 \\ 
			
			&                        				& Lattice        & 904317          & 0.263667 	 & 0.297612 \\ 
			
			\multirow{2}{*}{7} & \multirow{2}{*}{16} & Simplicial Set & 100000       &  	0.325715        & 0.361047 \\ 
			
			&                        				& Lattice        & 98016          & 0.319911 	 &  0.356507 \\ 
			
			\multirow{2}{*}{8} & \multirow{2}{*}{16} & Simplicial Set & 10000       & 0.387188        & 0.411047 \\ 
			
			&                        				& Lattice        & 9966          & 0.385937	 & 0.410454  \\ 
			
			\multirow{2}{*}{9} & \multirow{2}{*}{16} & Simplicial Set & 1302       & 0.625456        & 0.447447 \\ 
			
			&                        				& Lattice        & 1000          & 0.512343	 & 0.453334 \\ 
		\end{tabular}
		\caption{\small This table summarizes the statistics of $n$-simplicial sets with maximal degree $2d$ and the statistics of lattices underlying $n$-simplicial sets with maximal degree $d$ for fully computed data sets.}
		\label{table:sampledstats}
	\end{table}
	
	\begin{figure}[t]
		\centering
		\ifpictures
		\includegraphics[width=0.9\linewidth]{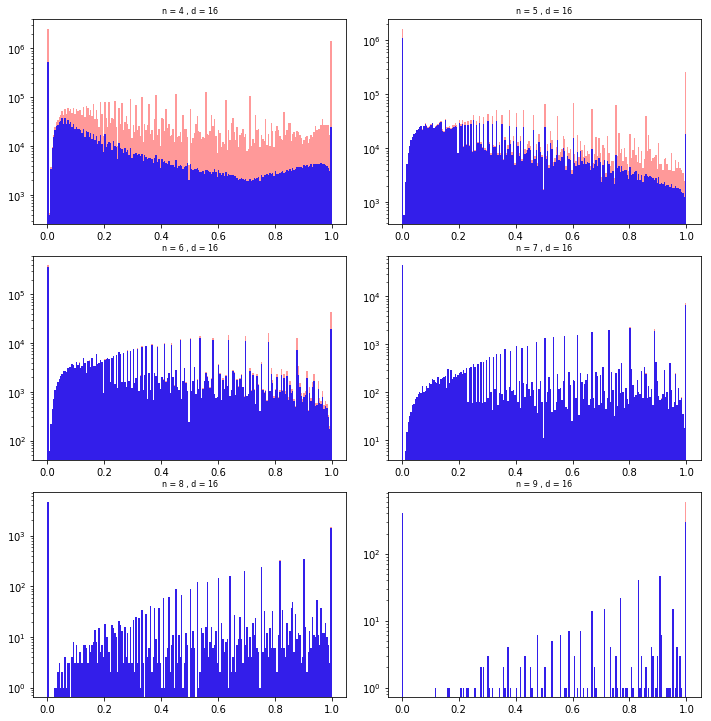}
		\fi
		\caption{\small The sampled distributions of $h$-ratio over the $n$-simplicial sets (red), and over lattices (blue) for the given $n$ and $2d$ in \cref{table:datainfo_sampled}.	}
		\label{Figure:SampledSummary}
	\end{figure}
	
	We note that the sampled data for $ n = 8 $ and $ n = 9 $ are incomplete, and hence do not provide much information about the statistics.
	However we include them in \cref{table:sampledstats} and \cref{Figure:SampledSummary}, to underline the fact that our sampling approach does not operate smoothly with HNF reduction for higher values of $n$.
	This is due to the exponential increase in the column permutations that one has to check for HNF reduction.	
	

	While the sampled data does not yield a clear, noise-free distribution, we still observe a similar trend in the expected $h$-ratios for $n \in \{4,5,6,7\}$. Analog to the fully computed (i.e., non-sampled) cases, the expected $h$-ratio of lattices are smaller than expected $h$-ratio of simplices also in the sampled cases.

	\item[Further Effects of HNF Reduction to Statistics] We observe some peaks in the graphs provided in \cref{Figure:FullComputedSummary}.
	In order to study these peaks closer, we plot the $h$-ratio distributions of the case $n=4$ as $2d$ increases in \cref{Figure:Dim4Progress}.
	\begin{figure}[t]
	\centering
	\ifpictures
	\includegraphics[width=0.9\linewidth]{./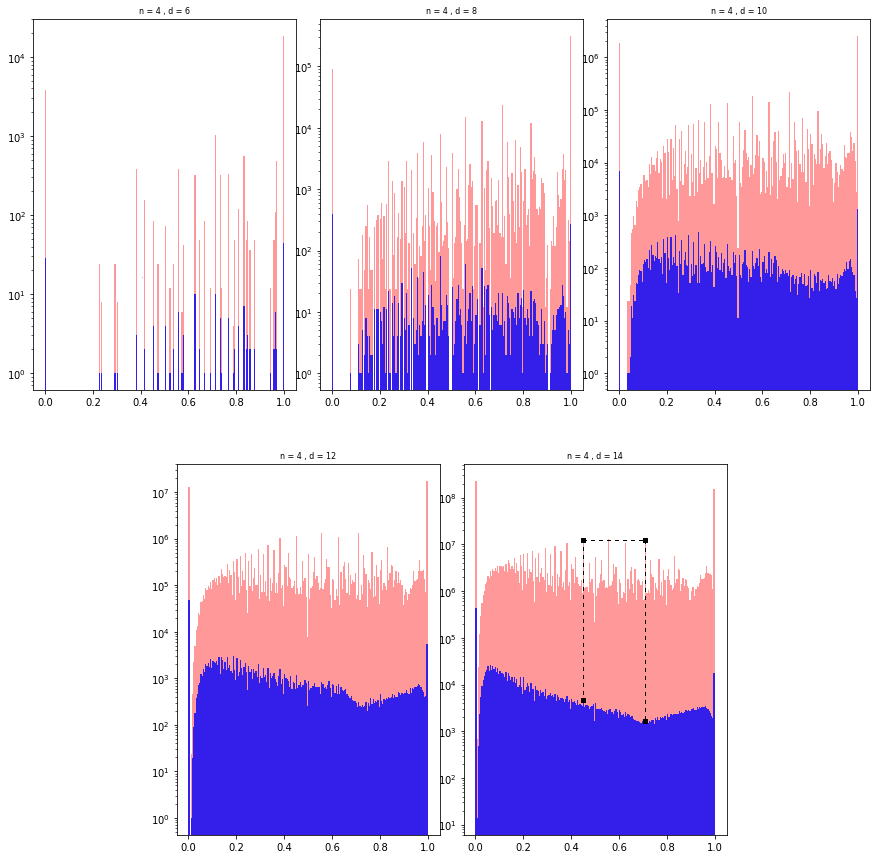}
	\fi
	\caption{\small The distributions of $h$-ratio over the $4$-simplicial sets (red), and over lattices (blue) for $2d=6,8,10,12,14$. On bottom right, we see that two spikes of same height are effected differently from HNF reduction.	}
	\label{Figure:Dim4Progress}
	\end{figure}
	For small values of $2d$ we have individual peaks because small maximal degree does only allow a few different  $h$-ratios to occur. 
	For sufficiently large $2d$ the distribution becomes visible, since larger variety of $h$-ratios can appear. 
	Furthermore, as $2d$ increases, the individual peaks that exists for smaller $2d$ survive and form the spikes we observe in the red distributions.
	\cref{Figure:Dim4Progress} illustrates how as $2d$ increases, individual peaks transform into a noisy distribution with spikes.

	Even though the spikes present in both distribution of $h$-ratio both over simplicial set (red) and over lattices (blue), they are visually more evident red distributions.
	By Hermite normal form reduction, we shrink down the sizes of the bins in the graphs. Therefore, observing smaller spikes in the blue graphs is expected.
	We observe from lower right graph in \cref{Figure:Dim4Progress}, that the shrinking of the spikes is not uniform.
	This is an expected observation since we already know that Hermite normal form reduction changes the expected $h$-ratios of the distributions.
	In addition to this, we see that shrinking of the spikes induced by the Hermite normal form reduction smoothens the distribution of the $h$-ratio.
	Therefore, considering the $h$-ratio distribution over lattices is not only plausible mathematically but also statistically.
	We have already mentioned for both sampled and fully computed cases, that the mean of the $h$-ratio distribution over the lattices is less than the mean over the simplicial sets.
	This observation suggests that Hermite normal form reduction has an non-trivial effect on $h$-ratio distribution. 
	Furthermore, interpreting this observation in terms of circuit polynomials, we conclude that circuit polynomials are less likely to be SOS with respect to the more valid statistics where we consider lattices instead of simplicial sets.

\end{description}
 
\section{Resume}
	
	In conclusion, we provide two main contributions. First, we show that the maximal mediated set structure, in particular the $h$-ratio, of a simplicial set is defined by the underlying lattice described in \cref{Corollary:MMSisInvariantofLattice}. 
	
	
	Second, we provide a large database of MMS that is partitioned according to the number of variables, $n$, maximal degree of the circuit polynomials $2d$ for $n,2d$ given in \cref{table:datainfo_full} and a sampled database with a similar partitioning for $n$ and $2d$ given in \cref{table:datainfo_sampled}.
	
	\begin{center}
		\href{https://polymake.org/downloads/MMS/}{https://polymake.org/downloads/MMS/}
	\end{center} 
	
	\cref{Corollary:MMSisInvariantofLattice} allows us to reduce the size of the database substantially and smoothens the spikes we observe in the data in exchange of an increased runtime.	
	Furthermore, we present indications concerning the distribution of the $h$-ratio.
	We observe that Hermite normal form reduction has a non-trivial effect on the $h$-ratio distribution, since it changes the mean of the distribution. 
	Moreover, for $n = 2$ up to $2d = 150$ we show that:
	\begin{enumerate}
		\item the distribution of $h$-ratio is a Bernoulli distribution.
		\item expected value is close to 1.
	\end{enumerate}
	The first part computationally proves \cref{Conjecture:ReznickDim2} up to $2d=150$. Yet, the full proof of the conjecture is still missing. The second observation follows then from (1) and \cite[Theorem 5.9]{Iliman:deWolff:Circuits}.

	Based on the computed data, we conjecture that for $n > 2$ as $2d$ grows the distribution of $h$-ratio over lattices does \textbf{not} converge to the uniform distribution. 
	Observing the data, we believe that the distribution of the $h$-ratio for fixed $n$ as $2d$ grows might be related to the Chi-squared distribution, but we do not have hard evidence for this fact.
	The main issue is improving the database and accessing to the $h$-ratio distribution for higher $n$ and $2d$. 
	A possible way to discover the distribution for higher $n$ and $2d$ is to sample the lattices with uniform standard distribution, and we are not aware how this can be done. More importantly, we need a faster way to determine whether $L_{\Delta_1}$ and $L_{\Delta_2}$ share the same Hermite Normal Form up to a permutation of columns. This will not only enable us to compute the exact distribution for more cases, but also increase the speed of our sampling. 
	In addition to improving the database, we will utilize it to tackle maximal mediated sets with methods from combinatorial algebraic geometry in a future project.
	
\bibliographystyle{amsalpha}
\bibliography{MMS_Computation}

\end{document}